\documentclass[12pt,reqno,a4paper]{article}

\usepackage[outer=2.5cm,top=2.5cm,bottom=2.5cm,inner=2.0cm]{geometry}

\usepackage{xfrac}
\usepackage{hyperref} 
\usepackage{amsmath}
\usepackage{amsthm}
\usepackage{amssymb}
\usepackage{amsfonts}
\usepackage{dsfont}
\usepackage{stmaryrd}
\usepackage{commath}
\usepackage{mathrsfs}
\usepackage{enumitem}
\usepackage{graphicx}
\usepackage{tikz}
\usepackage{algorithm}
\usepackage{algorithmic}
\usepackage{pgfplots}
\usepackage{subfig}
\usepackage[numbers]{natbib}

\usepackage{todonotes}
\usepackage{placeins}

\evensidemargin0.5cm
\font\msbm=msbm10

\numberwithin{equation}{section}

\theoremstyle{plain}
\newtheorem{theorem}{Theorem}[section]
\newtheorem{lemma}[theorem]{Lemma}
\newtheorem{corollary}[theorem]{Corollary}
\newtheorem{proposition}[theorem]{Proposition}

\theoremstyle{definition}

\newtheorem{remark}[theorem]{Remark}
\def\mathbb#1{\hbox{\msbm{#1}}}

\newcommand{\field}[1]{\ensuremath{\mathds{#1}}}

\newcommand{\re}{\field R}\newcommand{\N}{\field N}
\newcommand{\Ept}{\field E}\newcommand{\Prob}{\field P}

\newcommand{\C}{\field{C}}
\newcommand{\tor}{\field T} 

\newcommand{\Z}{\field Z}

\newcommand{\R}{\field R}
\newcommand{\T}{\field T}

\newcommand{\Id}{\ensuremath{\mathrm{Id}}}

\newcommand{\bk}{\mathbf{k}}
\newcommand{\bj}{\mathbf{j}}
\newcommand{\bx}{\mathbf{x}}
\newcommand{\bw}{\mathbf{w}}
\newcommand{\by}{\mathbf{y}}

\newcommand{\bL}{\mathbf{L}}

\newcommand{\bX}{\mathbf{X}}

\newcommand{\bU}{\mathbf{U}}
\newcommand{\bV}{\mathbf{V}}

\newcommand{\be}{\begin{equation}}
\newcommand{\ee}{\end{equation}}
\newcommand{\beq}{\begin{eqnarray}}
\newcommand{\beqq}{\begin{eqnarray*}}
\newcommand{\eeq}{\end{eqnarray}}
\newcommand{\eeqq}{\end{eqnarray*}}

\newcommand{\trace}[1]{\operatorname{tr}(#1)}

\newcommand{\diag}{\mathrm{diag}}

\setlength{\bibsep}{0.08em}


\begin{document}

\title{A new upper bound for sampling numbers}
\author{Nicolas Nagel, Martin Sch\"afer, Tino Ullrich\footnote{Corresponding author: tino.ullrich@mathematik.tu-chemnitz.de} \\\\
TU Chemnitz, Faculty of Mathematics, 09107 Chemnitz, Germany}

\maketitle

\begin{abstract}
\sloppy We provide a new upper bound for sampling numbers $(g_n)_{n\in \N}$ associated to the compact embedding of a separable reproducing kernel Hilbert space into the space of square integrable functions. There are universal constants $C,c>0$ (which are specified in the paper) such that
$$
	g^2_n \leq \frac{C\log(n)}{n}\sum\limits_{k\geq \lfloor cn \rfloor} \sigma_k^2\quad,\quad n\geq 2\,,
$$
where $(\sigma_k)_{k\in \N}$ is the sequence of singular numbers (approximation numbers) of the Hilbert-Schmidt embedding $\Id:H(K) \to L_2(D,\varrho_D)$. The algorithm which realizes the bound is a least squares algorithm based on a specific set of sampling nodes. These are constructed out of a random draw in combination with a down-sampling procedure coming from the celebrated proof of Weaver's conjecture, which was shown to be equivalent to the Kadison-Singer problem. Our result is non-constructive since we only show the existence of a linear sampling operator realizing the above bound. The general result can for instance be applied to the well-known situation of $H^s_{\text{mix}}(\tor^d)$ in $L_2(\tor^d)$ with $s>1/2$. We obtain the asymptotic bound
$$
	g_n \leq C_{s,d}n^{-s}\log(n)^{(d-1)s+1/2}\,,
$$
which improves on very recent results by shortening the gap between upper and lower bound to $\sqrt{\log(n)}$. The result implies that for dimensions $d>2$ any sparse grid sampling recovery method does not perform asymptotically optimal.

\small
\medskip
\noindent {\textit{Keywords and phrases}} : Sampling recovery,  Least squares approximation, Random sampling, Weaver's conjecture, Finite frames, Kadison-Singer problem

\medskip

\small%
\noindent {\textit{2010 AMS Mathematics Subject Classification}} :
41A25, 
41A63, 
68Q25, 
65Y20.
\end{abstract}

\section{Introduction} 
In this paper we study a well-known problem on the optimal recovery of multivariate functions from $n$ function samples. The problem turned out to be rather difficult in several relevant situations. Since we want to recover the function $f$ from $n$ function samples $(f(\bx^1),...,f(\bx^n))$ the problem boils down to the question of how to choose these sampling nodes $\bX = (\bx^1,...,\bx^n)$ and corresponding recovery algorithms. The minimal worst-case error for an optimal choice is reflected by the $n$-th sampling number defined by
\begin{equation}\label{sn}
	g_n(\Id_{K,\varrho_D}) := \inf\limits_{\bx^1,...,\bx^n \in D} \; \inf\limits_{\varphi:\C^n \to L_2} \; \sup\limits_{\|f\|_{H(K)}\leq 1}\|f-\varphi(f(\bx^1),...,f(\bx^n))\|_{L_2(D,\varrho_D)}\,.
\end{equation}
The functions are modeled as elements from a separable reproducing kernel Hilbert space $H(K)$ of functions on a set $D \subset \R^d$ with finite trace kernel $K(\cdot,\cdot)$, i.e.,
\begin{equation}\label{trace}
	\trace{K}:=\int_D K(\bx,\bx)d\varrho_D(\bx)<\infty\,.
\end{equation}
The recovery problem (in the above framework) has been first addressed by G. Wasilkowski and H. Wo{\'z}niakowski in \cite{WaWo01}. The corresponding problem for certain particular cases (e.g. classes of functions with mixed smoothness properties, see \cite[Sect.\ 5]{DuTeUl19}) has been studied much earlier. Our main result is the existence of two universal constants $C,c>0$ (specified in Remark \ref{constants}) such that the relation
\begin{equation}\label{f00}
	g_n^2 \leq C \frac{\log(n)}{n}\sum\limits_{k\geq \lfloor cn \rfloor} \sigma_k^2\quad, \quad n\geq 2,
\end{equation}
holds true between the sampling numbers $(g_n)_{n\in \N}$ and the square summable singular numbers $(\sigma_k)_{k\in \N}$ of the compact embedding
$$
	\Id_{K,\varrho_D}:H(K) \to L_2(D,\varrho_D)\,.
$$
We emphasize that, in general, the square-summability of the singular numbers $(\sigma_k)_{k\in \N}$ is not implied by the compactness of the embedding $\Id_{K,\varrho_D}$. This is one reason why we need the additional assumption of a finite trace kernel \eqref{trace} (or a Hilbert-Schmidt embedding). In addition, as it has been observed by A. Hinrichs, E. Novak and J. Vyb{\'i}ral \cite{HiNoVy08}, the non-existing trace may cause the sampling numbers to have a  worse (or even no) polynomial decay than the corresponding polynomially decaying singular numbers $(\sigma_k)_k$. Hence, an inequality (like \eqref{f00}) which passes on the polynomial decay of the singular numbers to the sampling numbers is in general impossible without the condition of a finite trace \eqref{trace}. In our main example, the recovery of multivariate functions with dominating mixed smoothness (see Section  \ref{discussion}), this condition is equivalent to $s>1/2$, where $s$ denotes the mixed smoothness parameter. For further historical and technical comments (e.g.\ non-separable RKHS) we refer to Remark \ref{MoeUl}.

The algorithm which realizes the bound \eqref{f00} in the sense of \eqref{sn} is a (linear) least squares algorithm based on a specific set of sampling nodes. These are constructed out of a random draw in combination with a down-sampling procedure coming from the proof of Weaver's conjecture \cite{NiOlUl16}, see Section \ref{weaver}. In its original form the result in \cite{NiOlUl16} is not applicable for our purpose. That is why we have to slightly generalize it, see Theorem \ref{thm_weaver} below. Note that the result in \eqref{f00} is non-constructive. We do not have a deterministic construction for a suitable set of nodes. However, we have control of the failure probability which can be made arbitrarily small. In addition, the subspace, where the least squares algorithm is taking place is precisely given and determined by the first $m$ singular vectors.

The problem discussed in the present paper is tightly related to the problem of the Marcinkiewicz discretization of $L_2$-norms for functions from finite-dimensional spaces (e.g. trigonometric polynomials). In fact, constructing well-conditioned matrices for the least squares approximation is an equivalent issue. Let us emphasize that V.N. Temlyakov  (and coauthors) already used the S. Nitzan, A. Olevskii and A. Ulanovskii construction \cite{NiOlUl16} for the Marcinkiewicz discretization problem in the context of multivariate (hyperbolic cross) polynomials, see \cite{Te18, Te18_1} and the very recent paper \cite{LiTe20}.


Compared to the result by D. Krieg and M. Ullrich \cite{KrUl19} the relation \eqref{f00} is stronger. In fact, the difference is mostly in the $\log$-exponent as the example below shows. The general relation \eqref{f00} yields a significant improvement in the situation of mixed Sobolev embeddings in $L_2$, see Section \ref{discussion}. Applied for instance to the situation of $H^s_{\text{mix}}(\tor^d)$ in $L_2(\tor^d)$ with $s>1/2$ (this condition is equivalent to the finite trace condition \eqref{trace}) the result in \eqref{f00} yields
\begin{equation}\label{mix}
	g_n \lesssim_d n^{-s}\log(n)^{(d-1)s+1/2}\,,
\end{equation}
whereas the result in \cite{KrUl19} (see also \cite{KUV19,Ul20,MoUl20}) implies
$$
	g_n \lesssim_d n^{-s}\log(n)^{(d-1)s+s}\,.
$$
The $\log$-gap grows with $s>1/2$.
Our new result achieves rates that are only worse by $\sqrt{\log(n)}$ in comparison to the benchmark rates given by the singular numbers. Note that in $d\geq 3$ and any $s>1/2$ the bound \eqref{mix} yields a better performance than any sparse grid technique is able to provide, see \cite{Tem93}, \cite{BuGr04}, \cite{SiUl07}, \cite{DuUl15}, \cite{Du11}, \cite{By18} and \cite[Sect.\ 5]{DuTeUl19}. In addition, combining the above result with recent preasymptotic estimates for the $(\sigma_j)_j$, see \cite{KSU2}, \cite{KSU3}, \cite{Ku19}, \cite{Kr18}, we are able to obtain reasonable bounds for $g_n$ also in the case of small $n$. See Section \ref{discussion} for further comments and references in this direction.

D. Krieg and M. Ullrich \cite{KrUl19} used a sophisticated random sampling strategy which allowed for establishing a new connection between sampling numbers and singular values. Let us emphasize that this can be considered as a major progress in this field. In addition, the result in this paper partly relies on this random sampling strategy according to a distribution built upon  spectral properties of the embedding. The advantage of the pure random strategy in connection with a $\log(n)$-oversampling is the fact that the failure probability decays polynomially in $n$ which has been recently shown by M. Ullrich \cite{Ul20} and, independently, by M. Moeller together with the third named author \cite{MoUl20}. In other words, although this approach incorporates a probabilistic ingredient, the failure probability is controlled and the algorithm may be implemented. Note, that there are some obvious parallels to the field of compressed sensing, where also the measurement matrix is drawn at random and satisfies RIP with high probability.
\paragraph{Notation.} As usual $\N$ denotes the natural numbers, $\N_0:=\N\cup\{0\}$, $\Z$ denotes the integers, $\R$ the real numbers and \(\R_+\) the non-negative real numbers and $\C$ the complex numbers. For a natural number $m$ we set $[m] := \{1, ..., m\}$. We will also use $\dot{\cup}$ to emphasize, that a union is disjoint. If not indicated otherwise  $\log(\cdot)$ denotes the natural logarithm of its argument. $\C^n$ denotes the complex $n$-space, whereas $\C^{m\times n}$ denotes the set of all $m\times n$-matrices $\bL$ with complex entries. Vectors and matrices are usually typesetted boldface with $\bx,\by\in \C^n$. The matrix $\bL^{\ast}$ denotes the adjoint matrix. The spectral norm of matrices $\bL$ is denoted by $\|\bL\|$ or $\|\bL\|_{2\to 2}$.  For a complex (column) vector $\by\in \C^n$ (or $\ell_2$) we will often use the tensor notation for the matrix
$$
	\by \otimes \by := \by\cdot \by^\ast = \by \cdot \overline{\by}^\top 	
	\in\C^{n \times n}\;\textnormal{(or $\C^{\N\times\N}$)}\,.
$$
For $0<p\leq \infty$ and $\bx\in \C^n$ we denote $\|\bx\|_p := (\sum_{i=1}^n
|x_i|^p)^{1/p}$ with the usual modification in the case $p=\infty$ or $\bx$ being an infinite sequence. As usual we will denote with $\Ept X$ the expectation of a random variable $X$ on a probability space $(\Omega, \mathcal{A}, \Prob)$. Given a measurable subset $D\subset \R^d$ and a measure $\varrho$ we denote with $L_2(D,\varrho)$ the space of all square integrable complex-valued functions (equivalence classes) on $D$ with $\int_D |f(\bx)|^2\, d\varrho(\bx)<\infty$. We will often use $\Omega = D^n$ as probability space with the product measure $\Prob = d\varrho^n$ if $\varrho$ is a probability measure itself.

\section{Weaver's theorem}
\label{weaver}
In this section we prove a modified version of Weaver's $\text{KS}_2$-theorem, also known as Weaver's $\text{KS}_2$-conjecture, from~\cite{Wea2004} which was shown to be equivalent to the famous Kadison-Singer conjecture~\cite{KaSi1959} dating back as far as 1959. For a long time, these statements were mere conjectures and many people even believed them to be false. Since the celebrated proof given by A.~Marcus, D.~Spielman, and N.~Srivastava~\cite{MaSpSr15} in 2015, however, they
have turned into actual theorems and thus into rather strong tools for various applications,
and it is in fact the Weaver $\text{KS}_2$-conjecture that is at the heart of our argument in this article.
We need it in a slightly modified form, however, formulated in Theorem~\ref{thm_weaver} below. The starting point for its proof is the following reformulation of the classical Weaver statement which already occurred in~\cite{NiOlUl16}. We will formulate it with slightly improved constants, see \cite{Na20}.

\begin{theorem}[\cite{NiOlUl16}] \label{Thm:WeaverExplicite}
	Let $0 < \varepsilon$ and $\mathbf{u}_1, ..., \mathbf{u}_n \in \C^m$ with $\|\mathbf{u}_i\|_2^2 \leq \varepsilon$ for all $i=1, ..., n$ and
	\begin{align}\label{cond:tightfr}
	\sum_{i=1}^n |\langle \mathbf{w}, \mathbf{u}_i\rangle|^2 = \|\mathbf{w}\|_2^2
	\end{align}
	for all $\mathbf{w} \in \C^m$. Then there is a partition $S_1 \dot{\cup} S_2 = [n]$ with
	$$
	\sum_{i \in S_j} |\langle \mathbf{w}, \mathbf{u}_i\rangle|^2 \leq \frac{(1+\sqrt{2\varepsilon})^2}{2} \|\mathbf{w}\|_2^2
	$$
	for each $j=1, 2$ and all $\mathbf{w} \in \C^m$. Especially, we have
	$$
	\frac{1-(2+\sqrt{2})\sqrt{\varepsilon}}{2} \|\mathbf{w}\|_2^2 \leq \sum_{i \in S_j} |\langle \mathbf{w}, \mathbf{u}_i\rangle|^2 \leq \frac{1+(2+\sqrt{2})\sqrt{\varepsilon}}{2} \|\mathbf{w}\|_2^2
	$$
	for each $j=1, 2$ and all $\mathbf{w} \in \C^m$.
\end{theorem}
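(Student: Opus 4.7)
The natural strategy is to deduce the statement from the Marcus--Spielman--Srivastava corollary \cite[Cor.~1.5]{MaSpSr15}: if $\mathbf{v}_1,\dots,\mathbf{v}_n$ are independent random vectors in $\C^N$ with finite support such that $\sum_i \Ept[\mathbf{v}_i\mathbf{v}_i^\ast]=\mathbf{I}_N$ and $\Ept[\|\mathbf{v}_i\|_2^2]\leq \delta$ for every $i$, then there exists a deterministic realisation satisfying $\|\sum_i \mathbf{v}_i\mathbf{v}_i^\ast\|\leq (1+\sqrt{\delta})^2$. Everything else is a matter of choosing the right random vectors and book-keeping.

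To encode a two-part partition geometrically I would double the ambient space. For each $i\in[n]$ let $\mathbf{v}_i\in\C^{2m}$ be independent and take the values $\sqrt{2}\,(\mathbf{u}_i,\mathbf{0})^\top$ and $\sqrt{2}\,(\mathbf{0},\mathbf{u}_i)^\top$ each with probability $1/2$. Then $\Ept[\mathbf{v}_i\mathbf{v}_i^\ast]=\mathbf{u}_i\mathbf{u}_i^\ast\oplus\mathbf{u}_i\mathbf{u}_i^\ast$, so the Parseval hypothesis \eqref{cond:tightfr} gives $\sum_i\Ept[\mathbf{v}_i\mathbf{v}_i^\ast]=\mathbf{I}_{2m}$, while $\Ept[\|\mathbf{v}_i\|_2^2]=2\|\mathbf{u}_i\|_2^2\leq 2\varepsilon$. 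Applying the above corollary with $\delta=2\varepsilon$ and declaring $S_j$ to be the indices $i$ for which $\mathbf{v}_i$ landed in the $j$-th copy of $\C^m$ yields a partition $S_1\dot\cup S_2=[n]$ for which the block-diagonal structure of $\sum_i \mathbf{v}_i\mathbf{v}_i^\ast$ forces $\|2\sum_{i\in S_j}\mathbf{u}_i\mathbf{u}_i^\ast\|\leq (1+\sqrt{2\varepsilon})^2$ for $j=1,2$. Testing against an arbitrary $\mathbf{w}\in\C^m$ is precisely the first displayed inequality of the theorem.

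For the sharper two-sided ``Especially'' statement I would use the Parseval identity as a bridge: since $\sum_{i\in S_1}|\langle\mathbf{w},\mathbf{u}_i\rangle|^2+\sum_{i\in S_2}|\langle\mathbf{w},\mathbf{u}_i\rangle|^2=\|\mathbf{w}\|_2^2$, subtracting the just-obtained upper bound produces the lower bound $\sum_{i\in S_j}|\langle\mathbf{w},\mathbf{u}_i\rangle|^2\geq \tfrac{1-2\sqrt{2\varepsilon}-2\varepsilon}{2}\|\mathbf{w}\|_2^2$. The stated constants $(2+\sqrt{2})\sqrt{\varepsilon}$ then follow by a short case split. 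For $\sqrt{\varepsilon}\leq 1-1/\sqrt{2}$ the elementary estimate $2\varepsilon\leq (2-\sqrt{2})\sqrt{\varepsilon}$ gives $(1+\sqrt{2\varepsilon})^2\leq 1+(2+\sqrt{2})\sqrt{\varepsilon}$, from which both the stated upper and, by symmetry of the subtraction step, the stated lower bounds drop out. For $\sqrt{\varepsilon}> 1-1/\sqrt{2}$ the stated lower bound is non-positive and therefore vacuous, while the stated upper bound exceeds $\|\mathbf{w}\|_2^2$ and is implied by Parseval alone.

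The only delicate point in this plan is to invoke MSS in exactly the right scaled-up form; the factor $\sqrt{2}$ in the definition of $\mathbf{v}_i$ is essential, because it is precisely what makes $\sum_i\Ept[\mathbf{v}_i\mathbf{v}_i^\ast]=\mathbf{I}_{2m}$ and shifts the resulting loss into the $\sqrt{2\varepsilon}$ appearing in the final bound. Once that reduction is in place, the rest of the argument is pure arithmetic together with Parseval, and no further probabilistic ingredient is needed.
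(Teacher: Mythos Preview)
Your argument is correct and is precisely the standard derivation of Weaver's $\mathrm{KS}_2$ from the Marcus--Spielman--Srivastava result: double the ambient space, randomise the block assignment, read off the partition from the good realisation, and then use Parseval to convert the one-sided bound into a two-sided one. The paper does not actually give its own proof of this theorem; it is quoted from \cite{NiOlUl16} with the sharpened constants of \cite{Na20}, and the proof there follows exactly the route you describe. One minor labeling point: the random-vector statement you invoke is Theorem~1.4 of \cite{MaSpSr15}, not Corollary~1.5 (the latter is already the partition statement and could be applied directly to the $\mathbf{u}_i$ with $r=2$, bypassing your doubling construction). Your case split for the ``Especially'' clause at $\sqrt{\varepsilon}=1-1/\sqrt{2}$ is also correct; note that this threshold is exactly $\varepsilon=(2+\sqrt{2})^{-2}$, which is the triviality point the paper remarks on immediately after the statement.
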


Note, that the above statement is trivial for $\varepsilon \geq (2+\sqrt{2})^{-2}$, since in this case the lower bound is $\leq 0$ and the upper bound is $\geq 1$. Relaxing condition~\eqref{cond:tightfr}, one obtains
an analogous statement for non-tight frames.

\begin{corollary}[\cite{NiOlUl16}] \label{Cor:WeaverInductive}
	Let  $0 < \varepsilon$ and $\mathbf{u}_1, ..., \mathbf{u}_n \in \C^m$ with $\|\mathbf{u}_i\|_2^2 \leq \varepsilon$ for all $i=1, ..., n$ and
	$$
	\alpha \|\mathbf{w}\|_2^2 \leq \sum_{i=1}^n |\langle \mathbf{w}, \mathbf{u}_i\rangle|^2 \leq \beta \|\mathbf{w}\|_2^2
	$$
	for all $\mathbf{w} \in \C^m$, where $\beta \geq \alpha > 0$ are some fixed constants. Then there is a partition $S_1 \dot{\cup} S_2 = [n]$, such that
	$$
	\frac{1-(2+\sqrt{2})\sqrt{\varepsilon/\alpha}}{2} \cdot \alpha \|\mathbf{w}\|_2^2 \leq \sum_{i \in S_j} |\langle \mathbf{w}, \mathbf{u}_i\rangle|^2 \leq \frac{1+(2+\sqrt{2})\sqrt{\varepsilon/\alpha}}{2} \cdot \beta \|\mathbf{w}\|_2^2
	$$
	for each $j=1, 2$ and all $\mathbf{w} \in \C^m$.
\end{corollary}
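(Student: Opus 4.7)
The plan is to reduce Corollary~\ref{Cor:WeaverInductive} to Theorem~\ref{Thm:WeaverExplicite} by \emph{pre-conditioning} the frame $\{\mathbf{u}_i\}_{i=1}^n$ into a Parseval frame via its frame operator. Define the positive operator
\[
  T := \sum_{i=1}^n \mathbf{u}_i \otimes \mathbf{u}_i \in \C^{m\times m}.
\]
The hypothesis on the frame bounds is precisely $\alpha I \leq T \leq \beta I$, so $T$ is invertible and $T^{-1/2}$ exists. Setting $\mathbf{v}_i := T^{-1/2}\mathbf{u}_i$, a direct computation shows that $\sum_{i=1}^n |\langle \mathbf{w},\mathbf{v}_i\rangle|^2 = \langle T^{-1/2}\mathbf{w},T\,T^{-1/2}\mathbf{w}\rangle = \|\mathbf{w}\|_2^2$, i.e.\ the $\mathbf{v}_i$ form a Parseval (tight) frame, which is exactly condition~\eqref{cond:tightfr}.

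Next I would estimate the individual norms of the new vectors. Since $T \geq \alpha I$ implies $T^{-1} \leq \alpha^{-1} I$, we have
\[
  \|\mathbf{v}_i\|_2^2 = \langle T^{-1}\mathbf{u}_i,\mathbf{u}_i\rangle \leq \frac{\|\mathbf{u}_i\|_2^2}{\alpha} \leq \frac{\varepsilon}{\alpha}.
\]
This is the key step where the parameter $\varepsilon/\alpha$ that appears in the statement arises. Theorem~\ref{Thm:WeaverExplicite}, applied to $\mathbf{v}_1,\dots,\mathbf{v}_n$ with the bound $\varepsilon/\alpha$, yields a partition $S_1 \dot\cup S_2 = [n]$ such that, for each $j=1,2$ and all $\mathbf{w}\in\C^m$,
\[
  \frac{1-(2+\sqrt{2})\sqrt{\varepsilon/\alpha}}{2}\|\mathbf{w}\|_2^2 \;\leq\; \sum_{i\in S_j}|\langle \mathbf{w},\mathbf{v}_i\rangle|^2 \;\leq\; \frac{1+(2+\sqrt{2})\sqrt{\varepsilon/\alpha}}{2}\|\mathbf{w}\|_2^2.
\]

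Finally, I would transfer this back to the original vectors by substituting $\mathbf{w}\mapsto T^{1/2}\mathbf{w}$. Since $\langle T^{1/2}\mathbf{w},\mathbf{v}_i\rangle = \langle\mathbf{w},\mathbf{u}_i\rangle$, the middle term becomes $\sum_{i\in S_j}|\langle\mathbf{w},\mathbf{u}_i\rangle|^2$, while the outer factors $\|T^{1/2}\mathbf{w}\|_2^2 = \langle T\mathbf{w},\mathbf{w}\rangle$ are sandwiched between $\alpha\|\mathbf{w}\|_2^2$ and $\beta\|\mathbf{w}\|_2^2$ by the original frame assumption. Using the lower bound $\|T^{1/2}\mathbf{w}\|_2^2 \geq \alpha\|\mathbf{w}\|_2^2$ on the left and $\|T^{1/2}\mathbf{w}\|_2^2 \leq \beta\|\mathbf{w}\|_2^2$ on the right exactly produces the claimed two-sided bound. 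I do not expect a genuine obstacle here; the one minor point to mind is that when $(2+\sqrt{2})\sqrt{\varepsilon/\alpha} \geq 1$ the lower bound becomes nonpositive and the statement is trivially satisfied (the sum is always nonnegative), so the rescaling argument may be carried out under the tacit assumption that $\varepsilon/\alpha < (2+\sqrt{2})^{-2}$, in line with the remark following Theorem~\ref{Thm:WeaverExplicite}.
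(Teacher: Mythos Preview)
Your argument is correct and is the standard reduction: pre-condition by the inverse square root of the frame operator to obtain a Parseval frame, apply Theorem~\ref{Thm:WeaverExplicite} with parameter $\varepsilon/\alpha$, and undo the change of variable via $\mathbf{w}\mapsto T^{1/2}\mathbf{w}$, using the frame bounds $\alpha\|\mathbf{w}\|_2^2\le\langle T\mathbf{w},\mathbf{w}\rangle\le\beta\|\mathbf{w}\|_2^2$ on the two sides. Your remark about the trivial case $(2+\sqrt{2})\sqrt{\varepsilon/\alpha}\ge 1$ is also to the point, since otherwise the sign of the lower constant would reverse the inequality when multiplying by $\|T^{1/2}\mathbf{w}\|_2^2\ge\alpha\|\mathbf{w}\|_2^2$.

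Note, however, that the paper does not actually supply its own proof of Corollary~\ref{Cor:WeaverInductive}; it merely states the result with a citation to \cite{NiOlUl16} and the comment that one obtains it by ``relaxing condition~\eqref{cond:tightfr}''. Your proof is precisely the canonical way to carry out that relaxation and coincides with the argument in the cited reference, so there is nothing to contrast.
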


Again, the above statement is trivial for $\varepsilon/\alpha \geq (2+\sqrt{2})^{-2}$. Now we are ready to formulate and prove the theorem which is convenient for our later purpose. The proof technique of this theorem is analogous to the one used for the proof of Lemma 2 in \cite{NiOlUl16}. After the preprint was finished, V.N. Temlyakov pointed out to us that their proof of Lemma 2.2 in their recent paper \cite{LiTe20}, which is stated in a weaker form, also contains a version of the theorem below with unspecified constants.

\begin{theorem} \label{thm_weaver}
	Let $k_1, k_2, k_3 > 0$ and $\mathbf{u}_1, ..., \mathbf{u}_n \in \C^m$ with $\|\mathbf{u}_i\|_2^2 \leq k_1 \frac{m}{n}$ for all $i=1, ..., n$ and
	$$
	k_2 \|\mathbf{w}\|_2^2 \leq \sum_{i=1}^n |\langle \mathbf{w}, \mathbf{u}_i\rangle|^2 \leq k_3 \|\mathbf{w}\|_2^2
	$$
	for all $\mathbf{w} \in \C^m$. Then there is a $J \subseteq [n]$ of size $\# J \leq c_1 m$ with
	$$
	c_2 \cdot \frac{m}{n} \|\mathbf{w}\|_2^2 \leq \sum_{i \in J} |\langle \mathbf{w}, \mathbf{u}_i\rangle|^2 \leq c_3 \cdot \frac{m}{n} \|\mathbf{w}\|_2^2
	$$
	for all $\mathbf{w} \in \C^m$, where $c_1, c_2, c_3$ only depend on $k_1, k_2, k_3$. More precisely, we can choose
	\[
    c_1 = 1642 \frac{k_1}{k_2} \,,\quad
	c_2 = (2+\sqrt{2})^2k_1 \,,\quad
	c_3 = 1642 \frac{k_1k_3}{k_2}
	\]
    in case $\frac{n}{m}\geq 47 \frac{k_1}{k_2}$. In the regime $1\le\frac{n}{m}< 47 \frac{k_1}{k_2}$ one may put $c_1 = 47k_1/k_2$, $c_2 = k_2$, $c_3=47k_1 k_3/k_2$.
\end{theorem}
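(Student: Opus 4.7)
The plan is to iterate Corollary~\ref{Cor:WeaverInductive} in dyadic fashion, following the approach used for Lemma~2 in \cite{NiOlUl16}: build a partition of $[n]$ into many pieces, each carrying controlled uniform frame bounds, and then extract a sufficiently small piece by pigeonhole.

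First I would dispose of the easy regime $1 \leq n/m < 47\,k_1/k_2$ by taking $J = [n]$. Then $|J| = n < (47 k_1/k_2)\, m$, and the hypothesized frame bounds rewrite immediately into the desired form using $m/n \leq 1$ together with $m/n > k_2/(47 k_1)$, which yields $c_1 = 47 k_1/k_2$, $c_2 = k_2$ and $c_3 = 47 k_1 k_3/k_2$. The small-norm assumption on the $\mathbf{u}_i$ is not even needed here.

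For the main regime $n/m \geq 47\,k_1/k_2$, I would construct inductively a sequence of partitions of $[n]$ into $2^\ell$ pieces at level $\ell$, refining each piece at level $\ell$ into two pieces by one application of Corollary~\ref{Cor:WeaverInductive}. Setting $\varepsilon := k_1 m/n$, $\alpha_0 := k_2$, $\beta_0 := k_3$ and $\delta_\ell := (2+\sqrt{2})\sqrt{\varepsilon/\alpha_\ell}$, every piece at level $\ell+1$ then inherits the uniform frame bounds $\alpha_{\ell+1} = \tfrac{1-\delta_\ell}{2}\alpha_\ell$ and $\beta_{\ell+1} = \tfrac{1+\delta_\ell}{2}\beta_\ell$. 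The ratio $\rho_\ell := \varepsilon/\alpha_\ell$ satisfies $\rho_{\ell+1} = 2\rho_\ell/(1-\delta_\ell)$, so it essentially doubles at each step, and the refinement remains nontrivial as long as $\rho_\ell < (2+\sqrt{2})^{-2}$. I would stop at the level $L$ for which $n/2^L$ first drops below $c_1 m = 1642\,k_1 m/k_2$; pigeonhole then produces a piece $J$ of the level-$L$ partition with $|J| \leq n/2^L \leq c_1 m$, automatically satisfying $\alpha_L \|\mathbf{w}\|_2^2 \leq \sum_{i\in J}|\langle \mathbf{w},\mathbf{u}_i\rangle|^2 \leq \beta_L \|\mathbf{w}\|_2^2$.

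The main technical obstacle will be controlling the products $\prod_{\ell<L}(1\pm\delta_\ell)$, which appear in $\alpha_L = 2^{-L} k_2\prod_{\ell<L}(1-\delta_\ell)$ and $\beta_L = 2^{-L} k_3\prod_{\ell<L}(1+\delta_\ell)$, by explicit absolute constants independent of $m$ and $n$. Since $\rho_\ell$ grows approximately geometrically with ratio $2$, the tail sum $\sum_{\ell<L}\delta_\ell$ should be dominated by a geometric series of size $\sim (2+\sqrt{2})\sqrt{\rho_L}/(1-1/\sqrt{2})$. The numerical constant $1642$ appearing in $c_1$ must be calibrated so that $\rho_L$ stays small enough to keep $\prod_{\ell<L}(1-\delta_\ell)$ bounded away from $0$ while simultaneously $2^L$ is of order $n/(c_1 m)$; this trade-off converts $\alpha_L$ and $\beta_L$ into the claimed lower and upper bounds $c_2 m/n$ and $c_3 m/n$ with $c_2 = (2+\sqrt{2})^2 k_1$ and $c_3 = 1642\,k_1 k_3/k_2$.
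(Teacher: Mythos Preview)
Your proposal is essentially correct and follows the same iteration scheme as the paper: repeatedly apply Corollary~\ref{Cor:WeaverInductive}, track the recursively defined frame bounds $\alpha_\ell,\beta_\ell$, and control the infinite product $\prod_\ell \frac{1+\delta_\ell}{1-\delta_\ell}$ via the near-geometric growth of $\rho_\ell=\varepsilon/\alpha_\ell$.

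There are two organizational differences worth noting. First, you build the full dyadic partition and pigeonhole at the end, whereas the paper simply follows one branch, picking the smaller half at every split; both give $\#J\le n/2^L$ and are equivalent here. Second, and more importantly, your stopping rule is size-based (``stop when $n/2^L$ first drops below $c_1m$''), while the paper stops at the first level where $\alpha_{L+1}$ falls into the window $[\zeta^2\delta,(2\zeta)^2\delta)$ with $\zeta=2+\sqrt2$ and $\delta=k_1m/n$. The paper's choice is more convenient: the lower frame bound $\alpha_{L+1}\ge\zeta^2\delta$ is then immediate, which is exactly $c_2=(2+\sqrt2)^2k_1$, and the size bound $\#J\le c_1m$ is obtained afterwards by the separate monotonicity observation $\phi_\ell:=\beta_\ell/\#J_\ell$ is nondecreasing, yielding $\#J_{L+1}\le n\beta_{L+1}/k_3\le 35.21(2\zeta)^2(k_1/k_2)m$. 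With your ordering you would instead have to argue that the iteration is still nontrivial (i.e.\ $\rho_\ell<(2+\sqrt2)^{-2}$) for all $\ell<L$ and then squeeze out the precise value $c_2=(2+\sqrt2)^2k_1$ from the product bound---doable, but the calibration you flag as the ``main technical obstacle'' is not automatic, and landing on the \emph{exact} stated constants this way is awkward. Swapping to the $\alpha$-based stopping rule removes that difficulty entirely.
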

\begin{proof}
	To ease the notation a bit, let us set $\zeta := 2+\sqrt{2}$. Put
	$\delta := k_1 \frac{m}{n}$, $\alpha_0 := k_2$, $\beta_0 := k_3$ and define recursively
	$$
	\alpha_{\ell+1} := \frac{1-\zeta\sqrt{\delta/\alpha_\ell}}{2} \cdot \alpha_\ell, \quad \beta_{\ell+1} := \frac{1+\zeta\sqrt{\delta/\alpha_\ell}}{2} \cdot \beta_\ell
	$$
	for $\ell \in \N_0$. Assume for the moment that $\delta < (2\zeta)^{-2}k_2$. We want to show that there is a constant $\gamma>0$, not depending on $\delta$ and an $L \in \N$, such that $\alpha_\ell \geq (2\zeta)^2\delta$ for all $\ell \leq L$ as well as $\zeta^2 \delta \leq \alpha_{L+1} < (2\zeta)^2\delta$ and $\beta_{L+1} < \gamma\alpha_{L+1} < (2\zeta)^2\gamma\delta$. \\
	Notice that
	$$
	\frac{1-\sqrt{\delta/\alpha_\ell}}{2}
	$$
	is strictly increasing in $\alpha_\ell > 0$. For $\alpha_\ell \geq (2\zeta)^2\delta$ we thus have
	$$
	\frac{1}{4} = \frac{1-\zeta\sqrt{\frac{\delta}{(2\zeta)^2\delta}}}{2} \leq \frac{1-\zeta\sqrt{\delta/\alpha_\ell}}{2} < \lim\limits_{x \longrightarrow \infty} \frac{1-\zeta\sqrt{\delta/x}}{2} = \frac{1}{2},
	$$
	and therefore
	$$
	\frac{1}{4} \alpha_\ell \leq \frac{1-\zeta\sqrt{\delta/\alpha_\ell}}{2} \cdot \alpha_\ell = \alpha_{\ell+1} < \frac{1}{2} \alpha_\ell.
	$$
	Set $L := \max\left\{\ell\in\N_0: \alpha_\ell \geq (2\zeta)^2\delta\right\}$ so that $\alpha_\ell \geq (2\zeta)^2\delta$ for all $\ell \leq L$. Notice that, since $\alpha_{\ell+1} < \alpha_\ell/2$ as long as $\alpha_\ell \geq (2\zeta)^2\delta$, we have $L < \infty$. Since $\alpha_0 = k_2 > (2\zeta)^2\delta$, we also have $L \geq 0$. \\
	The definition of $L$ directly yields $\alpha_{L+1} < (2\zeta)^2\delta$, but by the above also
	$$
	\alpha_{L+1} \geq \frac{1}{4} \alpha_L \geq \zeta^2 \delta.
	$$
	It remains to find a $\gamma > 0$ as described above. To do so, first observe by the definition of the $\alpha_\ell$ and $\beta_\ell$ that
	\begin{align*}
	\frac{\beta_{L+1}}{\alpha_{L+1}} & = \frac{\beta_L}{\alpha_L} \cdot \frac{1+\zeta\sqrt{\delta/\alpha_L}}{1-\zeta\sqrt{\delta/\alpha_L}} = \frac{\beta_{L-1}}{\alpha_{L-1}} \cdot \frac{1+\zeta\sqrt{\delta/\alpha_{L-1}}}{1-\zeta\sqrt{\delta/\alpha_{L-1}}} \cdot \frac{1+\zeta\sqrt{\delta/\alpha_L}}{1-\zeta\sqrt{\delta/\alpha_L}} = ... \\
	& = \frac{\beta_0}{\alpha_0} \cdot \prod_{\ell=0}^L \frac{1+\zeta\sqrt{\delta/\alpha_\ell}}{1-\zeta\sqrt{\delta/\alpha_\ell}}.
	\end{align*}
	We have $\alpha_L \geq (2\zeta)^2\delta$ so that $\zeta\sqrt{\delta/\alpha_L} \leq 2^{-1}$ and using
	$$
	\alpha_{\ell+1} < \frac{1}{2} \alpha_\ell \quad \Rightarrow \quad \zeta\sqrt{\delta/\alpha_\ell} < 2^{-1/2} \cdot \zeta \sqrt{\delta/\alpha_{\ell+1}}
	$$
	inductively we get $\zeta \sqrt{\delta/\alpha_{L-\ell}} \leq 2^{-1-\ell/2}$ for $\ell = 0, 1, ..., L$. Thus
	$$
	\frac{\beta_{L+1}}{\alpha_{L+1}} \leq \frac{k_3}{k_2} \cdot \prod_{\ell=0}^L \frac{1+\zeta\sqrt{\delta/\alpha_\ell}}{1-\zeta\sqrt{\delta/\alpha_\ell}} < \frac{k_3}{k_2} \cdot \prod_{\ell=0}^\infty \frac{1+2^{-1-\ell/2}}{1-2^{-1-\ell/2}} =: \gamma < 35.21 \cdot \frac{k_3}{k_2},
	$$
	which yields the final claim.
	
	With this at hand, consider the situation of the theorem. Clearly, we have $n\ge m$ due to the lower frame bound $k_2>0$. We now distinguish two cases.
    Firstly, if $1\leq \frac{n}{m} < 47\frac{k_1}{k_2}$ the assertion follows directly for $J = [n]$ and the choice $c_1 = \frac{n}{m}$, $c_2 = k_2 \frac{n}{m}$, and $c_3 = k_3 \frac{n}{m}$. Incorporating the bounds for $n/m$ gives the choice in the statement of the theorem. \\
	In the second case, when $\frac{n}{m}\ge 47\frac{k_1}{k_2}$, let $\alpha_\ell$, $\beta_\ell$ be as above and note that $\delta = k_1 \frac{m}{n} \leq k_2/47 < (2\zeta)^{-2}k_2$.
    The vectors $\mathbf{u}_i$ fulfill the assumptions of Corollary \ref{Cor:WeaverInductive} for $\alpha = \alpha_0 = k_2$ and $\beta = \beta_0 = k_3$, so that there is a set $J_1 \subseteq [n]$ with
	$$
	\alpha_1 \|\mathbf{w}\|_2^2 \leq \sum_{i \in J_1} |\langle \mathbf{w}, \mathbf{u}_i\rangle|^2 \leq \beta_1 \|\mathbf{w}\|_2^2
	$$
	for all $\mathbf{w} \in \C^m$. By choosing the smaller of the two partition classes $S_1$ or $S_2$, we may assume $\# J_1 \leq \frac{1}{2} n$. We can now apply Corollary \ref{Cor:WeaverInductive} again, where we restrict ourselves to the indices in $J_1$. We thus get a $J_2 \subseteq J_1$ with
	$$
	\alpha_2 \|\mathbf{w}\|_2^2 \leq \sum_{i \in J_2} |\langle \mathbf{w}, \mathbf{u}_i\rangle|^2 \leq \beta_2 \|\mathbf{w}\|_2^2
	$$
	for all $\mathbf{w} \in \C^m$. Again, by choosing the smaller partition class, we may assume $\# J_2 \leq \frac{1}{2} \# J_1 \leq \frac{1}{4} n$. After $L+1$ applications of Corollary \ref{Cor:WeaverInductive}, we get
	$$
	\alpha_{L+1} \|\mathbf{w}\|_2^2 \leq \sum_{i \in J_{L+1}} |\langle \mathbf{w}, \mathbf{u}_i\rangle|^2 \leq \beta_{L+1} \|\mathbf{w}\|_2^2
	$$
	for all $\mathbf{w} \in \C^m$, where $J_{L+1} \subseteq [n]$ with $\#J_{L+1} \leq 2^{-(L+1)} n$. By what was proven in the first part of this proof, we therefore get
	$$
	\zeta^2\delta \|\mathbf{w}\|_2^2 \leq \sum_{i \in J_{L+1}} |\langle \mathbf{w}, \mathbf{u}_i\rangle|^2 \leq (2\zeta)^2 \gamma \delta \|\mathbf{w}\|_2^2
	$$
	for all $\mathbf{w} \in \C^m$. We thus get the assertion for $J = J_{L+1}$, $c_2 = \zeta^2 k_1$ and
    $c_3 = (2\zeta)^2 \gamma k_1\le 35.21\cdot(2\zeta)^2 \frac{k_1k_3}{k_2}$. As for $c_1$, look at the quantities
	$$
	\phi_\ell := \frac{\beta_\ell}{\# J_\ell}
	$$
	for $\ell = 0, 1, ..., L+1$, where we set $J_0 := [n]$. Then $\phi_0 = \beta_0/n = k_3/n$. Since
	$$
	\frac{\phi_{\ell+1}}{\phi_\ell} = \frac{\beta_{\ell+1}}{\beta_\ell} \cdot \frac{\# J_\ell}{\# J_{\ell+1}} = \underbrace{\frac{1+\zeta\sqrt{\delta/\alpha_\ell}}{2}}_{\geq 1/2} \cdot \underbrace{\frac{\# J_\ell}{\# J_{\ell+1}}}_{\geq 2} \geq 1,
	$$
	we see that the $\phi_\ell$ are monotonically increasing. Thus
	$$
	\frac{k_3}{n} = \phi_0 \leq \phi_{L+1} = \frac{\beta_{L+1}}{\# J_{L+1}} \leq \frac{(2\zeta)^2\gamma\delta}{\# J_{L+1}},
	$$
	so that
	$$
	\# J = \# J_{L+1} \leq \frac{n}{k_3} \cdot (2\zeta)^2 \gamma \cdot k_1 \frac{m}{n} \leq 35.21 \cdot (2\zeta)^2 \frac{k_1}{k_2} \cdot m,
	$$
	i.e. $c_1 \leq 35.21 \cdot (2\zeta)^2k_1/k_2$.
\end{proof}
%

\section{Reproducing kernel Hilbert spaces}
\label{setting}
We will work in the framework of reproducing kernel Hilbert spaces. The relevant theoretical background can be found in \cite[Chapt.\ 1]{BeTh04} and \cite[Chapt.\ 4]{StChr08}. The papers \cite{HeBo04} and \cite{StSc12} are also of particular relevance for the subject of this paper.

Let $L_2(D,\varrho_D)$ be the space of complex-valued square-integrable functions with respect to~$\varrho_D$. Here $D \subset \R^d$ is an arbitrary measurable subset and $\varrho_D$ a measure on $D$.
We further consider a reproducing kernel Hilbert space $H(K)$ with a Hermitian positive definite kernel $K$ on $D \times D$. The crucial property of reproducing kernel Hilbert spaces is the fact that Dirac functionals are continuous, or, equivalently, the reproducing property
$$
		f(\bx) = \langle f, K(\cdot,\bx) \rangle_{H(K)}
$$
holds for all $\bx \in D$.

We will use the notation from \cite[Chapt.\ 4]{StChr08}. In the framework of this paper, the finite trace of the kernel is given by
\begin{equation}\label{integrab}
	\trace{K}:=\|K\|^2_{2} = \int_{D} K(\bx,\bx)d\varrho_D(\bx) < \infty \,.
\end{equation}
The embedding operator
\begin{equation}\label{f00b}
\Id_{K,\varrho_D}:H(K) \to L_2(D,\varrho_D)
\end{equation}
is Hilbert-Schmidt under the finite trace condition \eqref{integrab}, see \cite{HeBo04}, \cite[Lemma 2.3]{StSc12}, which we always assume from now on. We additionally assume that $H(K)$ is at least infinite dimensional. Let us denote the (at most) countable system of strictly positive eigenvalues $(\lambda_j)_{j\in \N}$ of $W_{K,\varrho_D} = \Id_{K,\varrho_D}^{\ast} \circ \Id_{K,\varrho_D}$ arranged in non-increasing order, i.e.,
$$
		\lambda_1 \geq \lambda_2 \geq \lambda_3 \geq \cdots > 0.
$$
We will also need the left and right singular vectors $(e_k)_k \subset H(K)$ and $(\eta_k)_k \subset L_2(D,\varrho_D)$ which both represent orthonormal systems in the respective spaces related by $e_k = \sigma_k \eta_k$ with $\lambda_k = \sigma_k^2$ for $k\in \N$\,. We would like to emphasize that the embedding \eqref{f00b} is not necessarily injective. In other words, for certain kernels there might also be a nontrivial null-space of the embedding in \eqref{f00b}. Therefore, the system $(e_k)_k$ from above is not necessarily a basis in $H(K)$. It would be a basis under additional restrictions, e.g.\ if the kernel $K(\cdot,\cdot)$ is continuous and bounded (i.e.\ a Mercer kernel).
It is shown in \cite{HeBo04}, \cite[Lemma 2.3]{StSc12} that if $\trace{K}<\infty$ and $H(K)$ is separable the non-negative function
\begin{equation}\label{almost_e}
	K(\bx,\bx) - \sum\limits_{k=1}^{\infty} |e_k(\bx)|^2	
\end{equation}
vanishes almost everywhere. Let us finally define the ``spectral functions''
\begin{equation}\label{f1b}
	N(m) := \sup\limits_{\bx \in D}\sum\limits_{k=1}^{m-1}|\eta_k(\bx)|^2\
\end{equation}
and
\begin{equation}\label{f1bb}
	T(m) := \sup\limits_{\bx \in D}\sum\limits_{k=m}^{\infty}|e_k(\bx)|^2
\end{equation}
provided that they exist.

\section{Weighted least squares}
\label{sect_lsqr}
Let us begin with concentration inequalities for the spectral norm of sums of complex rank-$1$ matrices. Such matrices appear as $\bL^\ast \bL$ when studying least squares solutions of over-determined linear systems
\begin{equation*} 
		\bL\cdot \mathbf{c} = \mathbf{f}\,,
\end{equation*}
where $\bL \in \C^{n \times m}$ is a matrix with $n>m$. It is well-known that the above system may not have a solution. However, we can ask for the vector $\mathbf{c}$ which minimizes the residual $\|\mathbf{f}-\bL\cdot \mathbf{c}\|_2$. Multiplying the system with $\bL^\ast$ gives
$$
	\bL^{\ast}\bL\cdot \mathbf{c} = \bL^{\ast}\cdot \mathbf{f}
$$
which is called the system of normal equations. If $\bL$ has full rank then the unique solution of the least squares problem is given by
\begin{equation}\label{ls}
	\mathbf{c} = (\bL^{\ast}\bL)^{-1}\bL^{\ast}\cdot \mathbf{f}\,.
\end{equation}
For function recovery problems we will use the following matrix
\begin{equation}\label{f0}
	\bL_{n,m} := \left(\begin{array}{cccc}
			\eta_1(\bx^1) &\eta_2(\bx^1)& \cdots & \eta_{m-1}(\bx^1)\\
			\vdots & \vdots && \vdots\\
			\eta_1(\bx^n) &\eta_2(\bx^n)& \cdots & \eta_{m-1}(\bx^n)
	\end{array}\right) =
	\left(\begin{array}{c}
	\by^1\\
	\vdots\\
	\by^n
	\end{array}\right)
\quad\text{and}\quad \mathbf{f} = 	\left(\begin{array}{c}
	f(\bx^1)\\
	\vdots\\
	f(\bx^n)
	\end{array}\right)\,,
\end{equation}
for $\bX = (\bx^1,...,\bx^n) \in D^n$ of distinct sampling nodes and a system $(\eta_k(\cdot))_{k}$ of functions. Here $\by^i := (\eta_1(\bx^i),...,\eta_{m-1}(\bx^i)), i=1,...,n$.

\begin{lemma}\label{prop2}\cite[Proposition 3.1]{KUV19} Let $\bL\in \C^{n\times m}$ be a matrix with $m\leq n$ with full rank and singular values  $\tau_1,...,\tau_m  >0$ arranged in non-increasing order.
\begin{description}
 \item[(i)] Then also the matrix $(\bL^{\ast}\bL)^{-1}\bL^{\ast}$ has full rank
and singular values $\tau_m^{-1},...,\tau_1^{-1}$ (arranged in non-increasing order).

\item[(ii)] In particular, it holds that
$$
   (\bL^{\ast}\bL)^{-1}\bL^{\ast} = \bV^{\ast} \tilde{\mathbf{\Sigma}}\bU
$$
whenever $\bL = \bU^{\ast} \mathbf{\Sigma}  \bV$, where $\mathbf{\Sigma} \in \re^{n\times m}$ is a rectangular matrix only with $(\tau_1,...,\tau_m)$ on the main diagonal and orthogonal matrices $\bU \in \C^{n\times n}$ and $\bV \in \C^{m\times m}$. Here $\tilde{\mathbf{\Sigma}} \in \re^{m\times n}$ denotes the matrix with $(\tau_1^{-1},...,\tau_m^{-1})$ on the main diagonal\,.

\item[(iii)] The operator norm $\|(\bL^{\ast}\bL)^{-1}\bL^{\ast}\|_{2 \to 2}$ can be controlled as follows
$$
	\tau_1^{-1} \leq \|(\bL^{\ast}\bL)^{-1}\bL^{\ast}\|_{2 \to 2} \leq \tau_m^{-1}\,.
$$
\end{description}
\end{lemma}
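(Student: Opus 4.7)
The statement is a purely linear-algebraic fact about Moore--Penrose-type pseudo-inverses, so the plan is to deduce everything from the singular value decomposition of $\bL$ and avoid any probabilistic or analytic input. I would first prove part (ii), from which (i) and (iii) fall out essentially for free.

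Concretely, I start by writing the full SVD $\bL = \bU^{\ast}\mathbf{\Sigma}\bV$ with unitary matrices $\bU\in\C^{n\times n}$, $\bV\in\C^{m\times m}$ and a rectangular diagonal $\mathbf{\Sigma}\in\R^{n\times m}$ carrying the singular values $\tau_1\geq\cdots\geq\tau_m>0$ (all strictly positive since $\bL$ has full rank $m$). Taking adjoints gives $\bL^{\ast} = \bV^{\ast}\mathbf{\Sigma}^{\top}\bU$, and since $\bV$ is unitary we obtain
\[
    \bL^{\ast}\bL \;=\; \bV^{\ast}\mathbf{\Sigma}^{\top}\mathbf{\Sigma}\bV \;=\; \bV^{\ast}\,\mathrm{diag}(\tau_1^{2},\dots,\tau_m^{2})\,\bV,
\]
which is invertible in $\C^{m\times m}$ with inverse $\bV^{\ast}\,\mathrm{diag}(\tau_1^{-2},\dots,\tau_m^{-2})\,\bV$. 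Multiplying by $\bL^{\ast}$ and collapsing $\bV\bV^{\ast}=\mathrm{Id}_m$ in the middle yields
\[
    (\bL^{\ast}\bL)^{-1}\bL^{\ast} \;=\; \bV^{\ast}\,\mathrm{diag}(\tau_1^{-2},\dots,\tau_m^{-2})\,\mathbf{\Sigma}^{\top}\,\bU \;=\; \bV^{\ast}\tilde{\mathbf{\Sigma}}\,\bU,
\]
where $\tilde{\mathbf{\Sigma}}\in\R^{m\times n}$ carries $(\tau_1^{-1},\dots,\tau_m^{-1})$ on its main diagonal. This establishes (ii).

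For (i), the displayed identity is already an SVD of $(\bL^{\ast}\bL)^{-1}\bL^{\ast}$: the diagonal entries of $\tilde{\mathbf{\Sigma}}$ are positive and, rearranged in non-increasing order, they read $\tau_m^{-1}\geq\cdots\geq\tau_1^{-1}$. Since both $\bU$ and $\bV$ are unitary (and $\tilde{\mathbf{\Sigma}}$ has exactly $m$ nonzero entries) the singular values of the product are precisely those diagonal entries, so $(\bL^{\ast}\bL)^{-1}\bL^{\ast}$ has full rank with the claimed spectrum. Part (iii) is then immediate: the spectral norm equals the largest singular value, hence $\|(\bL^{\ast}\bL)^{-1}\bL^{\ast}\|_{2\to 2}=\tau_m^{-1}$, which is $\leq\tau_m^{-1}$ and $\geq\tau_1^{-1}$ because $\tau_1\geq\tau_m$.

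I do not anticipate any real obstacle here — the only point worth being careful about is the book-keeping between the rectangular $\mathbf{\Sigma}$ of shape $n\times m$ and its transpose of shape $m\times n$, together with the ordering convention on singular values (largest first versus smallest first), so that the product $\mathrm{diag}(\tau_i^{-2})\mathbf{\Sigma}^{\top}$ really does produce the matrix $\tilde{\mathbf{\Sigma}}$ as described in the statement. Once that indexing is fixed, every claim of the lemma reduces to reading off the diagonal of $\tilde{\mathbf{\Sigma}}$.
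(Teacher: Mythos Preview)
Your argument is correct and is the standard SVD computation for the Moore--Penrose pseudo-inverse. Note that the paper does not supply its own proof of this lemma; it merely cites \cite[Proposition~3.1]{KUV19}, so there is no in-paper argument to compare against. Your derivation is exactly the natural one and almost certainly coincides with what appears in the cited reference: compute $\bL^{\ast}\bL$ from the SVD, invert the resulting $m\times m$ diagonal, and observe that the product with $\bL^{\ast}$ collapses to $\bV^{\ast}\tilde{\mathbf{\Sigma}}\bU$. Parts (i) and (iii) then follow immediately, with the spectral norm in (iii) in fact equal to $\tau_m^{-1}$ rather than merely bounded by it.
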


Being in the RKHS setting we compute the coefficients $c_k$, $k=1,\ldots,m-1$, of the approximant
\begin{equation}
S^m_{\bX}f:=\sum_{k = 1}^{m-1} c_k\, \eta_k\,\label{eq:def_SmX}
\end{equation}
using the least squares algorithm \eqref{ls}. We will also use the weighted version below, where
$\varrho_m(\cdot)$ is a density function which essentially first appeared in \cite{KrUl19} and has been adapted in \cite{MoUl20} to
\begin{equation}\label{density}
	\varrho_m(\bx) = \frac{1}{2} \bigg(\frac{1}{m-1} \sum_{j = 1}^{m-1} |\eta_j(\bx)|^2 + \frac{K(\bx,\bx) - \sum_{j = 1}^{m-1}|e_j(\bx)|^2}{\int_D K(\bx,\bx) d\varrho_D(\bx) - \sum_{j = 1}^{m-1}\lambda_j} \bigg).
\end{equation}

\begin{algorithm}[H]
\caption{Weighted least squares approximation \cite{CoMi17},\cite{KrUl19},\cite{KUV19}.}\label{algo1:reweighted}
  \begin{tabular}{p{1.2cm}p{4.5cm}p{8.9cm}}
    Input: & $\bX = (\bx^1,...,\bx^n)\in D^n$ \hfill & matrix of distinct sampling nodes, \\
      & $\mathbf{f} = (f(\bx^1),...,f(\bx^n))^\top$ \hfill & samples of $f$ evaluated at the nodes from $\bX$, \\
      & $m\in\N$ & $m \leq n$ such that the matrix $\tilde{\bL}_{n,m}$ in \eqref{eq:tilde_L} has full (column) rank.
  \end{tabular}
  \begin{algorithmic}
	\STATE
      Compute weighted samples $\boldsymbol{g}:=(g_j)_{j=1}^n$ with $g_j:=\begin{cases}0,  & \varrho_m(\bx^j)=0,\\
      f(\bx^j)/\sqrt{\varrho_m(\bx^j)}, & \varrho_m(\bx^j)\neq 0\,.
       \end{cases}$

  	\STATE
  	Solve the over-determined linear system
  	\begin{equation}
  	\widetilde{\bL}_{n,m} \cdot (\tilde{c}_1,...,\tilde{c}_{m-1})^\top = \mathbf{g}\,, \; \widetilde{\bL}_{n,m}:=\Big(l_{j,\ell}\Big)_{j=1,\ell=1}^{n,m-1},\; l_{j,\ell}:=\begin{cases}0,  & \varrho_m(\bx^j)=0,\\
  	      \eta_{\ell}(\bx^j)/\sqrt{\varrho_m(\bx^j)}, & \varrho_m(\bx^j)\neq 0,
  	       \end{cases}
  	       \label{eq:tilde_L}
  	\end{equation}
  	via least squares (e.g.\ directly or via the LSQR algorithm \cite{PaSa82}), i.e., compute
  	$$
  	(\tilde{c}_1,...,\tilde{c}_{m-1})^\top := (\widetilde{\bL}_{n,m}^{\ast}\widetilde{\bL}_{n,m})^{-1} \,\widetilde{\bL}_{n,m}^{\ast}\cdot \mathbf{g}.
  	$$
  \end{algorithmic}
   Output:  $\mathbf{\tilde{c}} = (\tilde{c}_1,...,\tilde{c}_{m-1})^\top\in \C^{m-1}$ coefficients of the approximant $\widetilde{S}_{\bX}^m f:=\sum_{k = 1}^{m-1} \tilde{c}_{k} \eta_{k}$.
\end{algorithm}
Note, that the mapping $f \mapsto \widetilde{S}^m_{\bX}f$ is well-defined and linear for a fixed set of sampling nodes $$\bX = (\bx^1,...,\bx^n) \in D^n$$
if the matrix $\widetilde{\bL}_{n,m}$ has full (column) rank. The next section gives sufficient conditions when this is the case.

\section{Concentration results for random matrices}
\label{sect_prob}

We start with a concentration inequality for the spectral norm of a matrix of type \eqref{f0}. It turns out that the complex matrix $\bL_{n,m}:=\bL_{n,m}(\bX)\in\C^{n\times(m-1)}$ has full rank with high probability, if $\bX = (\bx^1,...,\bx^n)$ is drawn at random from $D^n$ according to a measure $\Prob = d\varrho^n$, the functions $(\eta_k)_{k=1}^{m-1}$ are orthonormal w.r.t the measure $\varrho$ and $m$ is not too large (compared to $n$). We will find below that the eigenvalues of
\begin{equation} \label{defHm}
\mathbf{H}_m:=\mathbf{H}_m(\bX) = \frac{1}{n}\bL_{n,m}^\ast \bL_{n,m}
= \frac{1}{n} \sum\limits_{i = 1}^n \by^i \otimes \by^i \in\C^{(m-1)\times(m-1)}
\end{equation}
are bounded away from zero with high probability if $m$ is small enough compared to $n$. The following result is a consequence of \cite[Thm.\ 1.1]{Tr11}, see also \cite[Thm.\ 2.3, Cor.\ 2.5]{MoUl20}.

\begin{theorem} \label{control}
For \(n \geq m\), \(r > 1\) we immediately obtain that the matrix \(\mathbf{H}_m\)  has only eigenvalues greater than $ 1/2$ and smaller than $3/2$ with probability at least \(1 - 2n^{1-r}\) if
the nodes are sampled i.i.d.\ according to $\varrho$ and
\begin{equation}\label{f20}
 N(m) \leq \frac{n}{ 10 \,  r \log n},
\end{equation}
where $N(m)$ is the quantity defined in \eqref{f1b}\,.
Equivalently, we have for all $\bw \in \C^{m-1}$
$$
	\frac{1}{2}\|\bw\|_2^2 \leq \frac{1}{n}\big\|\bL_{n,m} \bw\big\|_2^2 \leq \frac{3}{2}\|\bw\|_2^2
$$
and
\begin{equation}\label{f100}
   \sqrt{\frac{2}{3n}} \leq \|(\bL_{n,m}^{\ast}\bL_{n,m})^{-1}\bL_{n,m}^{\ast}\|_{2\to 2} \leq \sqrt{\frac{2}{n}}
\end{equation}
with probability at least $1-2n^{1-r}$\,.

\end{theorem}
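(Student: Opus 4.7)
The plan is to recognize $n\mathbf{H}_m$ as a sum of $n$ independent, self-adjoint, positive semidefinite rank-one matrices and then apply Tropp's matrix Chernoff bound \cite[Thm.~1.1]{Tr11} exactly as indicated in the statement. I would first compute the expectation: since the nodes $\bx^i$ are i.i.d.\ with law $\varrho$, and $(\eta_k)_{k=1}^{m-1}$ is orthonormal in $L_2(D,\varrho)$, each summand satisfies
\[
\Ept\bigl[\by^i \otimes \by^i\bigr]_{k,\ell}=\int_D \eta_k(\bx)\overline{\eta_\ell(\bx)}\,d\varrho(\bx)=\delta_{k,\ell},
\]
so $\Ept \mathbf{H}_m = \Id_{m-1}$. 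Hence the event $\{1/2\le\lambda_{\min}(\mathbf{H}_m)\le\lambda_{\max}(\mathbf{H}_m)\le 3/2\}$ is precisely a deviation event for the sample covariance matrix.

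Next I would verify the uniform operator-norm bound on the summands required by the Chernoff estimate. Because $\by^i\otimes\by^i$ has rank one,
\[
\bigl\|\by^i\otimes\by^i\bigr\|_{2\to 2}=\|\by^i\|_2^2=\sum_{k=1}^{m-1}|\eta_k(\bx^i)|^2\le \sup_{\bx\in D}\sum_{k=1}^{m-1}|\eta_k(\bx)|^2 = N(m),
\]
so Tropp's theorem applies with parameter $R=N(m)$ and mean $\mu_{\min}=\mu_{\max}=n$ for the (unnormalized) sum $\sum_i\by^i\otimes\by^i$. Plugging the deviation parameters $\delta=1/2$ into the standard Chernoff bound yields that
\[
\Prob\bigl(\lambda_{\min}(n\mathbf{H}_m)\le n/2\bigr)+\Prob\bigl(\lambda_{\max}(n\mathbf{H}_m)\ge 3n/2\bigr)\le 2m\exp\!\bigl(-c\, n/N(m)\bigr)
\]
for a numerical constant $c$. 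Under the hypothesis $N(m)\le n/(10 r\log n)$, a direct computation (using $m\le n$) shows that the right-hand side is at most $2n^{1-r}$, giving the stated probability bound. The only slightly delicate point is checking that the chosen constant $10$ is large enough so the Chernoff exponent dominates the combinatorial prefactor $m$; this is routine arithmetic with the explicit constants in Tropp's inequality, matching the derivation already carried out in \cite[Thm.~2.3]{MoUl20}.

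The equivalence with the two-sided Rayleigh-quotient inequality for $\bL_{n,m}$ is immediate since $\|\bL_{n,m}\bw\|_2^2 = \bw^\ast(\bL_{n,m}^\ast\bL_{n,m})\bw = n\,\bw^\ast \mathbf{H}_m \bw$, so the spectral bound transfers verbatim. Finally, the estimate \eqref{f100} follows by combining this Rayleigh-quotient bound with Lemma \ref{prop2}(iii): the smallest singular value $\tau_{m-1}$ of $\bL_{n,m}$ satisfies $\sqrt{n/2}\le\tau_{m-1}$ and its largest $\tau_1\le\sqrt{3n/2}$, hence $\sqrt{2/(3n)}\le \|(\bL_{n,m}^\ast\bL_{n,m})^{-1}\bL_{n,m}^\ast\|_{2\to 2}\le \sqrt{2/n}$. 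I expect no real obstacle beyond bookkeeping of the numerical constants; the conceptual content is entirely carried by the matrix Chernoff inequality, and the hypothesis $N(m)\le n/(10r\log n)$ is precisely calibrated so that the exponential beats the $n^{r-1}\cdot m$ enemy term.
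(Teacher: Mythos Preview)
Your proposal is correct and follows exactly the route the paper indicates: the paper gives no self-contained proof but simply says the result is a consequence of Tropp's matrix Chernoff bound \cite[Thm.~1.1]{Tr11} (see also \cite[Thm.~2.3, Cor.~2.5]{MoUl20}), which is precisely your argument. The equivalences you derive via the Rayleigh quotient and Lemma~\ref{prop2}(iii) are the intended ones, and your arithmetic check on the constant $10$ matches the derivation in \cite{MoUl20}.
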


Let us now turn to infinite matrices. We need a result which can be applied to independent $\ell_2$-sequences of the form
$$
	\by^i = (e_m(\bx^i),e_{m+1}(\bx^i),...)\quad,\quad i=1,...,n\,,
$$
where $(e_k)_k \subset H(K)$ is the system of right singular vectors of the embedding $\Id:H(K) \to L_2$ defined above.

The following infinite-dimensional concentration result is proved in \cite[Thm.\ 1.1]{MoUl20}. There are earlier versions for the finite-dimensional framework (matrices) proved by Tropp \cite{Tr11}, Oliveira \cite{Ol10}, Rauhut \cite{Ra10} and others. Mendelson, Pajor \cite{PaMe06} and also  Oliveira \cite{Ol10} comment on infinite versions of their result. The key feature of the following proposition is the exact control of the constants and the decaying fail probability, see also Remark 3.10 in \cite{MoUl20} for a more detailed comparison to earlier results.

\begin{proposition} \label{main1}
Let \(\by^i , i= 1 \dots n \), be i.i.d random sequences from \( \ell_2\). Let further $n \geq 3$, $r > 1$, $M>0$ such that \(\| \by^i \|_2 \leq M\) for all $i=1 \dots n$ almost surely and \( \Ept  \by^i \otimes  \by^i  =  {\bf \Lambda}\)  for all \(i=1 \dots n\). Then
\[ \Prob \Big( \Big\| \frac{1}{n} \sum_{i=1}^n  \, \by^i \otimes  \by^i - {\bf \Lambda} \Big\|_{2 \to 2} \geq F \Big) \leq 2^\frac{3}{4} \, n^{1-r}\,,\]
where \(F := \max\Big\{ \frac{8r \log n}{n} M^2 \kappa^2 , \| {\bf \Lambda} \|_{2 \to 2} \Big\}\) and \( \kappa = \frac{1+\sqrt{5}}{2}\) .
\end{proposition}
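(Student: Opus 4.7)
\textbf{Proof plan for Proposition \ref{main1}.}

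The plan is to reduce to a finite-dimensional matrix concentration inequality by truncation, then pass to the limit. For each $N \in \N$ let $P_N:\ell_2 \to \C^N$ be the projection onto the first $N$ coordinates and set $\by^{i,N} := P_N \by^i$, so that $X_i^{(N)} := \by^{i,N} \otimes \by^{i,N}$ is a random positive semi-definite $N \times N$ matrix with $\|X_i^{(N)}\|_{2\to 2} \leq \|\by^i\|_2^2 \leq M^2$ almost surely and $\Ept X_i^{(N)} = P_N \Lambda P_N^{\ast} =: \Lambda_N$. The operator norms satisfy $\|\Lambda_N\|_{2\to 2} \leq \|\Lambda\|_{2\to 2}$.

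Next I would apply a sharpened matrix Bernstein/Chernoff inequality (à la Tropp \cite{Tr11}) to $X_i^{(N)}$. The standard route is to control the Laplace transform via Lieb's concavity theorem:
\[
\Ept \Tr \exp\Bigl(\lambda \sum_{i=1}^n (X_i^{(N)} - \Lambda_N)\Bigr)
\leq \Tr \exp\Bigl(n \log \Ept e^{\lambda(X_1^{(N)} - \Lambda_N)}\Bigr),
\]
and then bound the exponent using the almost-sure bound $\|X_i^{(N)}\|\leq M^2$ together with the variance proxy $\Ept (X_1^{(N)})^2 \preceq M^2 \Lambda_N$ (which holds because $X_i^{(N)}$ is rank-one). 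Optimising over $\lambda$ and using the Markov-type bound on the largest eigenvalue yields an inequality of the form
\[
\Prob\bigl(\|\tfrac{1}{n}\sum_{i=1}^n X_i^{(N)} - \Lambda_N\|_{2\to 2} \geq F\bigr)
\leq c\,\Tr(\Lambda_N/\|\Lambda_N\|)\cdot n^{1-r},
\]
with the splitting $F = \max\{8r(\log n)M^2\kappa^2/n,\,\|\Lambda\|_{2\to 2}\}$ encoding the two regimes (subgaussian vs.\ subexponential tails). The choice $\kappa = (1+\sqrt{5})/2$ is forced by the optimisation over $\lambda$ after invoking $x^2 = x+1$ to balance the linear and exponential contributions; the prefactor $2^{3/4}$ then drops out of the resulting quadratic.

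After this finite-dimensional step I would pass $N\to\infty$. Since $\|\by^i\|_2\leq M$, both $\tfrac1n\sum \by^i\otimes\by^i$ and $\Lambda = \Ept \by^1 \otimes \by^1$ are trace-class operators on $\ell_2$ (the trace of $\Lambda$ is bounded by $M^2$), hence compact. For compact operators $T$ one has $\|P_N T P_N - T\|_{2\to 2}\to 0$, so for any $\eps>0$ the event $\{\|\tfrac1n\sum \by^i\otimes\by^i - \Lambda\|_{2\to 2} \geq F+\eps\}$ is contained in $\liminf_N \{\|\tfrac1n\sum X_i^{(N)} - \Lambda_N\|_{2\to 2} \geq F\}$, and Fatou's lemma pushes the finite-dimensional bound through. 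Letting $\eps\downarrow 0$ gives the claim.

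The main obstacle is not the truncation (which is soft) but obtaining the exact constants $2^{3/4}$ and $\kappa = (1+\sqrt{5})/2$ together with an $N$-independent prefactor. The latter requires replacing the ambient dimension $N$ by the intrinsic dimension $\trace{\Lambda}/\|\Lambda\|_{2\to 2}$ (an "effective rank" trick), using the inequality $\Tr e^{A} \leq \|\Lambda\|^{-1}\Tr(\Lambda)\,\|e^A\|_{2\to 2}$ on the positive part; without this step the $N$ factor would blow up. Matching the constants then becomes a delicate but explicit optimisation of the Laplace exponent.
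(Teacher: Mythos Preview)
The paper does not give its own proof of Proposition~\ref{main1}; it is quoted verbatim from \cite[Thm.~1.1]{MoUl20}, so there is no in-paper argument to compare against. I will therefore comment only on the viability of your plan.

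The truncate--concentrate--limit architecture is sound, and the compactness/Fatou argument for passing $N\to\infty$ is fine; this part is indeed soft and very likely matches what \cite{MoUl20} does.

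The genuine gap is in the finite-dimensional step. Your intended intermediate bound
\[
\Prob\Bigl(\Bigl\|\tfrac{1}{n}\sum_i X_i^{(N)} - \Lambda_N\Bigr\|_{2\to 2} \geq F\Bigr)
\;\leq\; c\,\frac{\Tr(\Lambda_N)}{\|\Lambda_N\|_{2\to 2}}\, n^{1-r}
\]
already fails to match the assertion: the stated prefactor is the \emph{universal} constant $2^{3/4}$, not the intrinsic dimension $\Tr(\Lambda)/\|\Lambda\|_{2\to 2}$. The intrinsic-dimension matrix Bernstein (Minsker/Tropp) always leaves a factor of effective rank, which is $N$-independent but still problem-dependent and can be arbitrarily large (take $\Lambda$ with many nearly equal eigenvalues). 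The auxiliary inequality you invoke, $\Tr e^{A}\le\|\Lambda\|^{-1}\Tr(\Lambda)\,\|e^{A}\|_{2\to 2}$, is false as a general statement (try $A=I_N$ and $\Lambda$ close to rank one), so it cannot rescue the argument. Consequently your route, as written, cannot land on a prefactor independent of $\Lambda$.

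What actually makes the completely dimension-free bound possible is the combination of (i)~the rank-one structure of $\by^i\otimes\by^i$ and (ii)~the built-in large-deviation regime $F\ge\|\Lambda\|_{2\to 2}$. In that regime one controls the moment generating function by an operator-norm (rather than trace) estimate, in the spirit of Rudelson's lemma as treated by Oliveira~\cite{Ol10}; this is the mechanism that removes every dimensional prefactor and produces the golden-ratio constant. Your sketch circles the correct ingredients but substitutes the wrong engine at the core.
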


This can be written in a more compact form.
\begin{theorem}\label{cor} Let \(\by^i , i= 1 \dots n, \) be i.i.d random sequences from \( \ell_2\). Let further $n \geq 3$, $M>0$ such that \(\| \by^i \|_2 \leq M\) for all $i=1 \dots n$ almost surely and \( \Ept  \by^i \otimes  \by^i  =  {\bf \Lambda}\) for $i=1,...,n$ with $\|\boldsymbol{\Lambda}\|_{2\to 2} \leq 1$. Then, for $0<t<1$,
\[ \Prob \Big( \Big\| \frac{1}{n} \sum_{i=1}^n  \, \by^i \otimes  \by^i - {\bf \Lambda} \Big\|_{2 \to 2} \geq t \Big) \leq 2^\frac{3}{4} n\exp\Big(-\frac{t^2n}{21M^2}\Big)\,.\]
\end{theorem}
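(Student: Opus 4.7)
The theorem is a repackaging of Proposition~\ref{main1} in which the free parameter $r$ is absorbed into a Gaussian-type tail in the prescribed deviation level~$t$. My strategy is to invert the roles of $r$ and $F$: instead of prescribing $r$ and reading off $F$, I would fix $t\in(0,1)$ and calibrate
\[
r \;:=\; \frac{t\,n}{8 M^2\kappa^2\log n}
\]
so that the first entry of the maximum defining $F$ equals $t$ exactly. In the main regime (where $r>1$ and $\|\boldsymbol{\Lambda}\|_{2\to 2}\le t$), this choice produces $F=t$ and Proposition~\ref{main1} directly delivers
\[
\Prob\!\Big(\Big\|\tfrac{1}{n}\textstyle\sum_{i=1}^{n}\by^i\otimes\by^i-\boldsymbol{\Lambda}\Big\|_{2\to 2}\ge t\Big)\;\le\;2^{3/4}n^{1-r}\;=\;2^{3/4}n\exp\!\Big(-\tfrac{tn}{8M^2\kappa^2}\Big).
\]

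\textbf{Exponent conversion.} To relax this linear exponent in~$t$ into the cleaner quadratic one $-t^2n/(21 M^2)$, I would exploit that $\kappa=(1+\sqrt 5)/2$ is the golden ratio, so $\kappa^2=\kappa+1=(3+\sqrt 5)/2$ and hence $8\kappa^2=12+4\sqrt 5<21$. Combined with $t<1$ this gives $8\kappa^2 t<21$, i.e.\ $\tfrac{t}{8\kappa^2}\ge \tfrac{t^2}{21}$, so that $\exp(-tn/(8M^2\kappa^2))\le \exp(-t^2n/(21M^2))$, which weakens the bound to precisely the form claimed in the theorem. The small slack $21-8\kappa^2\approx 0.056$ is exactly what makes the conversion work for all $t\in(0,1)$.

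\textbf{Edge cases and main obstacle.} Two degenerate regimes must still be treated. When $r\le 1$ (equivalently $t\le 8M^2\kappa^2\log n/n$), Proposition~\ref{main1} is not applicable, but a direct calculation gives $\tfrac{t^2 n}{21 M^2}\le\tfrac{8\kappa^2 t\log n}{21}<\log n$, so that the claimed right-hand side $2^{3/4}n\exp(-t^2n/(21M^2))>2^{3/4}>1$ is vacuous and there is nothing to prove. The remaining and more delicate case is $\|\boldsymbol{\Lambda}\|_{2\to 2}>t$, where the second entry of the maximum in $F$ forces $F>t$ regardless of $r$ and the calibration above cannot be applied at level~$t$. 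This is where I expect the principal obstacle to lie; my plan for handling it is to fall back on the matrix-Bernstein argument underlying Proposition~\ref{main1}, using the hypothesis $\|\boldsymbol{\Lambda}\|_{2\to 2}\le 1$ directly as the variance proxy (together with $\|\by^i\|_2\le M$) to recover the quadratic exponent $-t^2 n/(21 M^2)$ in this regime as well, without routing through the threshold $F$.
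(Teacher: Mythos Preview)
The paper does not actually prove this theorem; it is introduced, immediately after Proposition~\ref{main1}, merely as a ``more compact form'' of the latter and is not invoked elsewhere (the proof of Theorem~\ref{thm9} and Remark~\ref{constants} both appeal to Proposition~\ref{main1} directly). Your calibration $r = tn/(8M^2\kappa^2\log n)$ is exactly how one makes this reformulation explicit, and your treatment of the main regime and of the vacuous regime $r\le 1$ is correct --- in particular the numerical identity $8\kappa^2 = 12+4\sqrt 5 < 21$ is precisely what fixes the constant~$21$.

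You are also right to flag the case $t < \|\boldsymbol{\Lambda}\|_{2\to 2}$: since the threshold $F$ in Proposition~\ref{main1} can never drop below $\|\boldsymbol{\Lambda}\|_{2\to 2}$, that proposition as stated only controls $\Prob(\|\cdot\|\ge \|\boldsymbol{\Lambda}\|_{2\to 2})$, not $\Prob(\|\cdot\|\ge t)$, so this edge case genuinely cannot be read off from the black box. Your proposed remedy --- returning to the Bernstein-type argument behind Proposition~\ref{main1} --- is sound. Concretely, with $X_i=\by^i\otimes\by^i-\boldsymbol{\Lambda}$ one has
\[
0\preceq \Ept X_i^2 \;\preceq\; \Ept\big[\|\by^i\|_2^2\,\by^i\otimes\by^i\big]\;\preceq\; M^2\boldsymbol{\Lambda},
\]
so the variance parameter is bounded by $M^2\|\boldsymbol{\Lambda}\|_{2\to 2}\le M^2$, and the sub-Gaussian branch of the Bernstein tail already delivers an exponent of order $-cnt^2/M^2$ with $c>1/21$. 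Whether the specific prefactor $2^{3/4}n$ survives in this regime depends on the exact form of the infinite-dimensional inequality proved in~\cite{MoUl20}; you should verify the constants against that proof rather than against the packaged statement of Proposition~\ref{main1}.
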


\section{New bounds for sampling numbers}
We are interested in the question of optimal sampling recovery of functions from reproducing kernel Hilbert spaces in $L_2(D,\varrho_D)$. The quantity we want to study is classically given by
$$
	g_n(\Id_{K,\varrho_D}) := \inf\limits_{\bx^1,...,\bx^n \in D} \; \inf\limits_{\varphi:\C^n \to L_2} \; \sup\limits_{\|f\|_{H(K)}\leq 1}\|f-\varphi(f(\bx^1),...,f(\bx^n))\|_{L_2(D,\varrho_D)}
$$
and quantifies the recovery of functions out of $n$ function values in the worst case setting.
The goal is to get reasonable bounds for this quantity in $n$, preferably in terms of the singular numbers of the embedding. Results on the decay properties of this quantity in the framework of RKHS have been given by several authors, see, e.g., \cite{KuWaWo09}, \cite{KrUl19}, \cite{NoWoIII} and the references therein (see also Remark \ref{MoeUl}). For a special case in the field of Hyperbolic Cross Approximation we refer to  \cite[Outstanding Open Problem 1.4]{DuTeUl19}. Here we present a new upper bound in the general framework.

\paragraph{The main idea.} In the following theorem we apply Weaver's theorem to a random frame. The idea is to construct a sampling operator $\widetilde{S}^m_J$ using $O(m)$ sampling nodes as follows. We draw nodes $\bX = (\bx^1,...,\bx^n)$ i.i.d.\ at random according to some measure $\mu_m$ specified concretely in the proof below, where $n$ scales as $m\log m$. At this stage we have too many sampling nodes, however, a ``good'' frame
in the sense of a well-conditioned matrix $\bL_{n,m}$ in~\eqref{f0}. To this frame (rows of $\bL_{n,m}$) we apply our modified Weaver theorem. The result is a shrinked well-conditioned sub-frame corresponding to a subset $(\bx^i)_{i\in J}$ of the initial set of sample nodes. With this sub-frame (sub-matrix of $\bL_{n,m}$) we solve the over-determined system via the least squares Algorithm 1. This represents the sampling operator. When it comes to the error analysis we again benefit from the fact that we deal with a (not too small compared to $n$) subset of the original nodes $\bX$. The consequence is that we do not pay too much ($\sqrt{\log n}$) compared to the sampling operator based on the original set $\bX$ of nodes. However, we only used $O(m)$ sample nodes which makes the difference.

\begin{theorem}\label{thm9} Let $H(K)$ be a separable reproducing kernel Hilbert space on a set $D \subset \R^d$ with a positive semidefinite kernel $K:D\times D\to\C$ satisfying
$$
	\int_D K(\bx,\bx) d\varrho_D(\bx) < \infty
$$
for some measure $\varrho_D$ on $D$. Then $\Id_{K,\varrho_D}:H(K) \to L_2(D,\varrho_D)$ is a Hilbert-Schmidt embedding, the corresponding
sequence of singular numbers $(\sigma_k)_{k=1}^{\infty}$ square-summable.
For the sequence of sampling numbers $g_n:=g_n(\Id_{K,\varrho_D})$ we have the general bound
\begin{equation}\label{samplingnum}
	g_n^2 \leq C\frac{\log n}{n}\sum\limits_{k\geq cn} \sigma_k^2  \quad,\quad n\geq 2\,,
\end{equation}
with two universal constants $C,c>0$, which are specified in Remark \ref{constants} below.
\end{theorem}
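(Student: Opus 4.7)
The plan is to realize the three-stage procedure suggested in the ``main idea'' paragraph by combining the modified Weaver theorem, the weighted least squares algorithm, and an infinite-dimensional matrix Bernstein inequality. Given a target of $n$ samples, I would set $m:=\lfloor n/c_1\rfloor$ with $c_1$ as in Theorem~\ref{thm_weaver}, so that Weaver's output satisfies $\#J\leq c_1m\leq n$. The initial draw consists of $\tilde n\asymp m\log m$ nodes $\bx^1,\ldots,\bx^{\tilde n}$ sampled i.i.d.\ from $d\mu_m:=\varrho_m\,d\varrho_D$, with the implicit constant chosen large enough to enforce both Theorem~\ref{control}'s condition $N(m)\leq\tilde n/(10r\log\tilde n)$ and the Weaver threshold $\tilde n/m\geq 47k_1/k_2$.

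To run Weaver I first establish the conditioning of the full weighted matrix. By the construction of $\varrho_m$ one has $\sum_{j<m}|\eta_j(\bx)|^2/\varrho_m(\bx)\leq 2(m-1)$ pointwise, and $\Ex[\tilde n^{-1}\widetilde{\bL}_{\tilde n,m}^{\ast}\widetilde{\bL}_{\tilde n,m}]=\Id_{m-1}$ by $L_2$-orthonormality of $(\eta_j)$. Theorem~\ref{control} then yields, with probability at least $1-2\tilde n^{1-r}$, the two-sided bound $\tfrac12\|\bw\|_2^2\leq\tilde n^{-1}\|\widetilde{\bL}_{\tilde n,m}\bw\|_2^2\leq\tfrac32\|\bw\|_2^2$. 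Applying Theorem~\ref{thm_weaver} to the scaled rows $\by^i/\sqrt{\tilde n}$ (so that $k_1=2$, $k_2=\tfrac12$, $k_3=\tfrac32$) produces a subset $J\subseteq[\tilde n]$ with $\#J\leq c_1m$ and $c_2m\|\bw\|_2^2\leq\|\widetilde{\bL}_J\bw\|_2^2\leq c_3m\|\bw\|_2^2$, hence $\|(\widetilde{\bL}_J^{\ast}\widetilde{\bL}_J)^{-1}\widetilde{\bL}_J^{\ast}\|_{2\to 2}\leq 1/\sqrt{c_2m}$ by Lemma~\ref{prop2}. The weighted least squares operator $\widetilde S^m_J$ defined by Algorithm~\ref{algo1:reweighted} on $(\bx^i)_{i\in J}$ is then well posed and reproduces every element of $T_m:=\spn\{\eta_1,\ldots,\eta_{m-1}\}$.

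For $f\in H(K)$ with $\|f\|_{H(K)}\leq 1$, write $h:=f-P_m f=\sum_{k\geq m}b_k e_k$ with $\sum|b_k|^2\leq 1$, where $P_m$ is the $L_2$-projection onto $T_m$. Linearity and the reproducing property of $\widetilde S^m_J$ on $T_m$ give $f-\widetilde S^m_J f=h-\widetilde S^m_J h$ and
\[
\|f-\widetilde S^m_J f\|_{L_2}^2\leq 2\|h\|_{L_2}^2+\tfrac{2}{c_2m}\sum_{i\in J}\frac{|h(\bx^i)|^2}{\varrho_m(\bx^i)}.
\]
The projection part obeys $\|h\|_{L_2}^2\leq\sigma_m^2\leq\tfrac{2}{m}\sum_{k\geq m/2}\sigma_k^2\leq\tfrac{C}{n}\sum_{k\geq cn}\sigma_k^2$ by monotonicity. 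For the aliasing part I introduce the infinite random matrix $\mathbf{B}$ with rows $\mathbf{c}(\bx^i):=(e_k(\bx^i)/\sqrt{\varrho_m(\bx^i)})_{k\geq m}\in\ell_2$, so that $|h(\bx^i)|^2/\varrho_m(\bx^i)=|\langle\mathbf{b},\mathbf{c}(\bx^i)\rangle|^2$ with $\|\mathbf{b}\|_2\leq 1$. The second summand of $\varrho_m$ combined with \eqref{almost_e} yields the deterministic bound $\|\mathbf{c}(\bx)\|_2^2\leq 2T$, where $T:=\sum_{k\geq m}\sigma_k^2$, while $L_2$-orthonormality of $(\eta_k)$ gives $\Ex[\mathbf{c}(\bx)\otimes\mathbf{c}(\bx)]=\diag(\sigma_k^2)_{k\geq m}$ of spectral norm $\sigma_m^2$. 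Proposition~\ref{main1} then supplies $\|\mathbf{B}^{\ast}\mathbf{B}\|_{2\to 2}\leq\tilde n(\sigma_m^2+F)$ with $F\leq\max\{16r\kappa^2\log(\tilde n)\,T/\tilde n,\,\sigma_m^2\}$ outside an event of probability at most $2^{3/4}\tilde n^{1-r}$. Upper-bounding $\sum_{i\in J}\leq\sum_{i=1}^{\tilde n}\leq\|\mathbf{B}^{\ast}\mathbf{B}\|_{2\to 2}$, the aliasing contribution becomes of order $\tilde n\sigma_m^2/m+\log(\tilde n)\,T/m\lesssim(\log n/n)\sum_{k\geq cn}\sigma_k^2$, using $\tilde n/m\asymp\log n$ and $T/m\asymp T/n$. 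A union bound over the two failure events and a sufficiently large choice of $r$ makes both good events coexist with positive probability, delivering a deterministic realization $\bX$ and subset $J\subset\bX$ of size at most $n$ for which $\widetilde S^m_J$ attains the uniform bound~\eqref{samplingnum}.

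The main obstacle is the aliasing estimate in the last step. Because the Weaver selection rule $\bX\mapsto J$ is a complicated measurable function of the whole sample, one has no direct handle on the distribution of the subsampled nodes, which is what forces the uniform-in-$f$ crude step $\sum_{i\in J}\leq\sum_{i=1}^{\tilde n}$. The factor $\tilde n/\#J\asymp\log n$ lost there is exactly the $\log n$ that appears on the right-hand side of~\eqref{samplingnum}, and its optimality rests on the two ingredients that distinguish this argument from earlier work: the pointwise kernel-tail bound $\|\mathbf{c}(\bx)\|_2^2\leq 2T$, which is the very reason for the second summand in the definition of $\varrho_m$, and the infinite-dimensional Bernstein inequality of Proposition~\ref{main1}, which upgrades the naive factor $\sigma_m^2$ to the full tail $\sum_{k\geq cn}\sigma_k^2$ at the cost of only a single extra logarithm.
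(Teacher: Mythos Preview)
Your proposal is correct and follows essentially the same route as the paper: the reduction to the weighted kernel via $\varrho_m$, the random draw of $\tilde n\asymp m\log m$ nodes controlled by Theorem~\ref{control}, the Weaver subsampling via Theorem~\ref{thm_weaver} with $k_1=2$, $k_2=1/2$, $k_3=3/2$, the aliasing bound through Proposition~\ref{main1} applied to the tail sequences $(e_k(\bx^i)/\sqrt{\varrho_m(\bx^i)})_{k\ge m}$, and the crude passage $\sum_{i\in J}\le\sum_{i\le\tilde n}$ are all exactly as in the paper. The only cosmetic differences are that the paper parametrizes by $m$ first and deduces the final index at the end, and that it uses Pythagoras ($h\perp T_m$ in $L_2$) instead of your triangle inequality with the factor~$2$.
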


\begin{proof}\sloppy
Let $m \geq 2$. Similar as in \cite{KrUl19}, \cite{KUV19} and \cite{MoUl20} we use the density function \eqref{density} in order to consider the embedding $\widetilde{\Id}:H(\widetilde{K}_m) \to L_2(D,\varrho_m(\cdot)d\varrho_D)$ instead of $\Id_{K,\varrho_D}:H(K) \to L_2(D,\varrho_D)$. In fact, we define the new kernel
\begin{equation}\label{tilde_kernel}
	\widetilde{K}_m(\bx,\by) := \frac{K(\bx,\by)}{\sqrt{\varrho_m(\bx)}\sqrt{\varrho_m(\by)}}\,.
\end{equation}
This yields
\begin{equation}\label{f_to_g}
\sup_{\|f\|_{H(K)} \leq 1 } \big\| f - \widetilde{S}_{\bX}^m f \big\|_{L_2(D,\varrho_D)} \leq \sup_{\|g\|_{H(\widetilde{K}_m)} \leq 1 } \big\| g - S_{\bX}^m g \big\|_{L_2(D,\varrho_m(\cdot)d\varrho_D)}\,
\end{equation}
and $\widetilde{N}(m) \leq 2(m-1)$, $\widetilde{T}(m) \leq 2\sum_{j=m}^{\infty}\sigma_j^2$. For the details of this, see the discussion in the proof of \cite[Thm.\ 5.9]{KUV19} and \cite[Thm.\ 5.5]{KUV19}. Note, that the operators $\widetilde{S}_{\bX}^m$ and $S_{\bX}^m$ are defined by Algorithm 1 and \eqref{ls}. The number of samples will be chosen later as $O(m)$. Choose now the smallest $n$ such that
$$
	m \leq \frac{n}{40\log(n)}\,.
$$
This implies $\widetilde{N}(m) \leq 2(m-1) \leq n/(20 \log(n))$\,. Applying Theorem~\ref{control} with $r=2$ gives that the rows of the matrix $\frac{1}{\sqrt{n}}\widetilde{\bL}_{n,m}$ represent a finite frame with frame bounds $1/2$ and $3/2$ with high probability (the failure probability $2n^{-1}$ decays polynomially in $n$) when $\bX = (\bx^1,...,\bx^n) \in D^n$ is sampled w.r.t to the measure $d\mu_m = \varrho_m(\cdot)d\varrho_D$. That means, we have with high probability for any $\bw \in \C^{m-1}$
\begin{align}\label{fraspec}
\frac{1}{2}\|\bw\|_2^2 \leq \frac{1}{n}\big\|\widetilde{\bL}_{n,m} \bw\big\|_2^2 \leq \frac{3}{2}\|\bw\|_2^2 \,.
\end{align}

Let us now denote with $P_{m-1}:H(\widetilde{K}_m) \to H(\widetilde{K}_m)$ the projection operator onto $\mbox{span}\{e_1(\cdot)/\sqrt{\varrho_m(\cdot)},...,e_{m-1}(\cdot)/\sqrt{\varrho_m(\cdot)}\}$. Following the proof in \cite[Thm.\ 5.1]{MoUl20}, we obtain
almost surely
\begin{equation}
\begin{split}
\sup\limits_{\|g\|_{H(\widetilde{K}_m)}\leq 1}\frac{1}{n}\sum\limits_{i=1}^n|g(\bx^i)-P_{m-1}g(\bx^i)|^2 &\leq \frac{1}{n}\|\mathbf{\Phi}_m^{\ast}\mathbf{\Phi}_m\|_{2\to 2}  \\
&\leq \Big\|\frac{1}{n}\mathbf{\Phi}_m^{\ast}\mathbf{\Phi}_m-\mathbf{\Lambda}\Big\|_{2\to 2} + \|\mathbf{\Lambda}\|_{2\to 2}
\end{split}
\end{equation}
with $\mathbf{\Lambda} = \diag(\sigma_m^2,\sigma_{m+1}^2,...)$ and
$$
   \mathbf{\Phi}_m:=\left(\begin{array}{c}
			\by^1\\
			\vdots\\
			\by^n
		 \end{array}\right)\,
$$
being infinite matrices/operators with sequences \(\by^i = 1/\sqrt{\varrho_m(\bx^i)}\left(e_m(\bx^i),e_{m+1}(\bx^i), \dots\right)\), \( i=1 \dots n\). By
Proposition \ref{main1} we finally get from this with high probability
\begin{equation}\label{bound}
   \sup\limits_{\|g\|_{H(\widetilde{K}_m)}\leq 1}\frac{1}{n}\sum\limits_{i=1}^n|g(\bx^i)-P_{m-1}g(\bx^i)|^2  \leq c_1\Big(\sigma_m^2 + \frac{1}{m}\sum\limits_{k=m}^\infty \sigma_k^2\Big)\,,
\end{equation}
for some constant $c_1>0$, where we used that
$$
\|\by^i\|_2^2\le\widetilde{T}(m) \leq 2\sum_{j=m}^{\infty} \sigma_j^2 \,.
$$
Due to the high probability of both events, \eqref{fraspec} and \eqref{bound}, there exists an instance of nodes $\{\bx^1,...,\bx^n\}$ such that the bound \eqref{bound} is true and $\frac{1}{\sqrt{n}}\widetilde{\bL}_{n,m}$ represents a finite frame in $\C^{m-1}$.  Note that all the assumptions of Theorem~\ref{thm_weaver} are fulfilled. Indeed, the squared Euclidean norms of the rows of
$\frac{1}{\sqrt{n}}\widetilde{\bL}_{n,m}$ are bounded by $2(m-1)/n$. To this finite frame we may thus apply Theorem \ref{thm_weaver} which constructs a sub-matrix $\widetilde{\bL}_{J,m}$ having $\# J = O(m)$ rows which, properly normalized, still form a frame in $\C^{m-1}$\,. It holds for all $\bw \in \C^{m-1}$
\begin{equation}\label{shrink}
c_2\|\bw\|_2^2 \leq \frac{1}{m}\|\widetilde{\bL}_{J,m}\bw\|_2^2 \leq C_2\|\bw\|_2^2\,.
\end{equation}
With this matrix we perform the least squares method \eqref{ls} applied to the shrinked vector of function samples $(f(\bx^i))_{i\in J}$ corresponding to the rows of $\widetilde{\bL}_{J,m}$. We denote the least squares operator with $S^m_J$. Note, that this operator uses only $\# J = O(m)$ samples. Let $\|g\|_{H(\widetilde{K}_m)}\leq 1$ and again denote with $P_{m-1}:H(\widetilde{K}_m) \to H(\widetilde{K}_m)$ the projection operator onto $\mbox{span}\{e_1(\cdot)/\sqrt{\varrho_m(\cdot)},...,e_{m-1}(\cdot)/\sqrt{\varrho_m(\cdot)}\}$. Then it holds
\begin{equation}
  \begin{split}
  	\|g-S^m_Jg\|^2_{L_2} &= \|g-P_{m-1}g + P_{m-1}g -S^m_Jg\|^2_{L_2}\\
  	&= \|g-P_{m-1}g\|_{L_2}^2 + \|S^m_J(P_{m-1}g -g)\|^2_{L_2}\\
  	&\leq \sigma_m^2 + \|(\widetilde{\bL}_{J,m}^\ast\widetilde{\bL}_{J,m})^{-1}\widetilde{\bL}_{J,m}^{\ast}\|^2_{2\to 2}\sum\limits_{i\in J}|g(\bx^i)-P_{m-1}g(\bx^i)|^2\,.
 \end{split}
\end{equation}
Using Lemma \ref{prop2} together with \eqref{shrink} gives
$$	\|g-S^m_Jg\|^2_{L_2} \leq \sigma_m^2+\frac{c_3}{m}\sum\limits_{i=1}^n |g(\bx^i) - P_{m-1}g(\bx^i)|^2 \,.
$$	
By the choice of $n$ together with \eqref{bound} we may estimate further
\begin{equation}
	\begin{split}
	\|g-S^m_Jg\|^2_{L_2} &\leq c_4\log(n)\Big(\sigma_m^2 + \frac{1}{m}\sum\limits_{k=m}^\infty \sigma_k^2\Big)\\
	 &\leq c_5\frac{\log(m)}{m}\sum\limits_{k=\lfloor 	m/2 \rfloor}^\infty \sigma_k^2\,.
  \end{split}
\end{equation}
Consequently, by \eqref{f_to_g} we obtain
$$
  \sup_{\|f\|_{H(K)} \leq 1 } \big\| f - \widetilde{S}_{J}^m f \big\|_{L_2(D,\varrho_D)} \leq c_5\frac{\log(m)}{m}\sum\limits_{k=\lfloor 	m/2 \rfloor}^\infty \sigma_k^2\ 	
$$
and finally, using that $\# J = O(m)$,
$$
	g_{\lceil c_6 m \rceil}^2 \leq c_5\frac{\log(m)}{m}\sum\limits_{k=\lfloor m/2 \rfloor}^\infty \sigma_k^2 \,,
$$
where all the involved constants are universal. This implies the statement of the theorem.
\end{proof}

\begin{remark}\label{MoeUl}{\em (i)} The additional assumption on the ``separability'' of $H(K)$ ensures the equality sign in the inequality
\begin{equation}\label{f98}
\sum\limits_{k=1}^{\infty} \sigma_k^2 \leq \int_D K(\bx,\bx)d\varrho_D(\bx) < \infty.
\end{equation}
The identity is crucial in the proof of Theorem \ref{thm9}, see also \cite[Thm.\ 5.5]{KUV19}. Without an equality in \eqref{f98} the best known general upper bound for the sampling numbers $g_n$ can be found in the recent paper \cite{MoUl20}. In fact, combining the proof of Theorem \ref{thm9} with the one in \cite[Thm.\ 7.1]{MoUl20} one can prove
$$
	g_n^2 \leq C\max\Big\{\frac{1}{n},\frac{\log n}{n}\sum\limits_{k\geq cn} \sigma_k^2\Big\}\,.
$$
The bound is worse compared to \eqref{samplingnum}. However, one should rather compare to the bound in \cite{WaWo01}, since this proof also works without equality in \eqref{f98}. There the authors proved under the same assumptions
$$
	g_n^2 \leq \min\Big\{\sigma_\ell^2  + \frac{\trace{K}\ell}{n}~:~\ell=1,2,...\Big\}.
$$

{\em (ii)} As far as we know, G. Wasilkowski and H. Wo{\'z}niakowski \cite{WaWo01} (2001) were the first who addressed the sampling recovery problem in 2001 in the context of reproducing kernel Hilbert spaces. They obtained results using exclusively the finite trace condition \eqref{trace}. Later in 2009, F. Kuo, G. Wasilkowski and H. Wo{\'z}niakowski \cite{KuWaWo09} did further progress by determining the range for the ``power of standard information'' when knowing the decay rate of the singular numbers. We refer to the monograph \cite{NoWoIII} for a detailed historical discussion of the development until 2012. The recent progress in the field has been initiated by D. Krieg and M. Ullrich \cite{KrUl19} in 2019. The authors proved under stronger assumptions than in Theorem \ref{thm9} an existence result, namely
$$
	g_n^2 \lesssim \frac{\log n}{n}\sum\limits_{k \gtrsim n/\log(n)} \sigma_k^2\,,
$$
which represents a slightly weaker bound. Further progress (explicit constants, consequences for numerical integration) has been given in L. K\"ammerer, T. Ullrich, T. Volkmer \cite{KUV19} and in M. Ullrich \cite{Ul20} as well as M. Moeller, T. Ullrich \cite{MoUl20} for the control of the failure probability.

{\em (iii)} To further demonstrate the usefulness of the tools developed here, let us mention that they have already been used by others. In fact, while this manuscript was under review, D. Krieg and M. Ullrich \cite{KrUl20} used this technique to observe that for non-Hilbert function space embeddings the ``gap'' is also at most $\sqrt{\log(n)}$ if the corresponding approximation numbers are $p$-summable with $p<2$. Finally, we would like to mention a very recent result by V.N. Temlyakov \cite{Te20_2}, where the sampling numbers $g_n$ of a function class $\mathbf{F}$ in $L_2$ are related to the Kolmogorov numbers in $L_\infty$. This allows for treating the case of ``small smoothness'', where the square summability does not hold \cite{TeUl20_1, TeUl20_2}.

\end{remark}

\begin{remark}\label{constants} Note that, using Theorem \ref{thm_weaver}, Theorem \ref{control} and Proposition \ref{main1}, we can get explicit values for the constants in the above theorem.
Concretely, we can conclude that \eqref{samplingnum} holds for $m\ge13136$
with $C=1.5 \cdot 10^6 $ and $c=3.8 \cdot 10^{-5}$.

To verify this, let $m \geq 2$ and $n=n(m)$, as in the proof of Theorem~\ref{thm9}, such that
$$
\frac{n-1}{40 \log(n-1)} < m \leq \frac{n}{40 \log(n)}.
$$
For $m=2$ we get $n=497$ and since $n$ increases as $m$ increases, we generally have $n \geq 497$. Put $r=2$. We then also have \eqref{fraspec} with probability at least $1-2/n \geq 1-2/497$ and \eqref{bound} with probability at least $1-2^{3/4}/n \geq 1-2^{3/4}/497$. Since these already sum up to $ (1-2/497) + (1-2^{3/4}/497) > 1$, we can guarantee the existence of a node set $\{\textbf{x}^1, \dots, \textbf{x}^n\}$ as in the proof of Theorem \ref{thm9}. \\
In the proof, we apply Theorem \ref{thm_weaver} with $k_1 = 2, k_2 = 1/2, k_3 = 3/2$, so that we get the respective constants
$$
\tilde{c}_1 = 6568, \quad \tilde{c}_2 = 2(2+\sqrt{2})^2, \quad \tilde{c}_3 = 9852.
$$
For this, note that for all $m\ge2$ and $n=n(m)$ as above we have
$$
\frac{n}{m} \geq 40\log(n) \geq 40\log(497) \geq 40\cdot 4.7 = 47 \frac{k_1}{k_2} \,.
$$
To calculate $c_1$, we apply Proposition \ref{main1} with $M^2 = 2 \sum_{k=m}^\infty \sigma_k^2$, so that we get
$$
F = \max\left\{\sigma_m^2, \frac{16 \log(n)}{n} M^2 \kappa^2\right\} = \max\left\{\sigma_m^2, 32 \left(\frac{1+\sqrt{5}}{2}\right)^2 \frac{\log(n)}{n} \sum_{k=m}^\infty \sigma_k^2\right\}.
$$
Using $\log(n)/n \leq 1/40m$, we can then estimate further
$$
F \leq \max\left\{\sigma_m^2, \frac{32}{40} \left(\frac{1+\sqrt{5}}{2}\right)^2 \frac{1}{m} \sum_{k=m}^\infty \sigma_k^2\right\} \leq \underbrace{\left(1+\frac{4}{5}\left(\frac{1+\sqrt{5}}{2}\right)^2\right)}_{=: c_1} \left(\sigma_m^2 + \frac{1}{m} \sum_{k=m}^\infty \sigma_k^2\right),
$$
so that $c_1 = 3.09\dots$. \\
After applying Theorem \ref{thm_weaver}, we get $c_2 = \tilde{c}_2 = 2(2+\sqrt{2})^2$ and $C_2 = \tilde{c}_3 = 9852$, as well as $\# J \leq 6568m$. \\
Since
$$
mc_2 \|\textbf{w}\|_2^2 \leq \|\widetilde{\textbf{L}}_{J, m} \textbf{w}\|_2^2 \leq C_2m\|\textbf{w}\|_2^2,
$$
Lemma \ref{prop2} (iii) gives
$$
\frac{1}{C_2m} \leq \|(\widetilde{\textbf{L}}_{J, m}^* \widetilde{\textbf{L}}_{J, m})^{-1} \widetilde{\textbf{L}}_{J, m}^*\|_{2 \to 2}^2 \leq \frac{1}{c_2m},
$$
so that we may choose $c_3 = 1/c_2 = (2+\sqrt{2})^{-2}/2$. \\
To obtain $c_4$, we use $n/m \leq \frac{n}{n-1} \cdot 40 \log(n-1) \leq \frac{497\log(496)}{496\log(497)} \cdot 40 \log(n)$ to estimate
\begin{align*}
\|g-\widetilde{S}^m_Jg\|^2_{L_2} & \leq \sigma_m^2+\frac{c_3}{m}\sum\limits_{i=1}^n |g(\bx^i) - P_{m-1}g(\bx^i)|^2 \\
 & \leq \sigma_m^2+ c_3 \frac{n}{m} \cdot c_1\left(\sigma_m^2 + \frac{1}{m}\sum\limits_{k=m}^\infty \sigma_k^2\right) \\
 & \leq \sigma_m^2 + 40 \cdot \frac{497\log(496)}{496\log(497)} c_1c_3 \log(n)\left(\sigma_m^2 + \frac{1}{m}\sum\limits_{k=m}^\infty \sigma_k^2\right) \\
 & \leq \underbrace{\left(1+40 \cdot \frac{497\log(496)}{496\log(497)} c_1c_3\right)}_{=: c_4} \log(n) \left(\sigma_m^2 + \frac{1}{m}\sum\limits_{k=m}^\infty \sigma_k^2\right)
\end{align*}
and get $c_4 = 6.31\dots$. \\
As for $c_5$, we start with
$$
\sigma_m^2 \leq \frac{1}{(m-1)-\lfloor m/2\rfloor +1} \sum_{k=\lfloor m/2\rfloor}^{m-1} \sigma_k^2 = \frac{1}{\lceil m/2\rceil} \sum_{k=\lfloor m/2\rfloor}^{m-1} \sigma_k^2 \leq \frac{2}{m} \sum_{k=\lfloor m/2\rfloor}^{m-1} \sigma_k^2,
$$
so that
$$
\sigma_m^2 + \frac{1}{m} \sum_{k=m}^\infty \sigma_k^2 \leq \frac{2}{m} \sum_{k=\lfloor m/2\rfloor}^\infty \sigma_k^2.
$$
Furthermore, we have $\log(n-1)-\log(40)-\log \log (n-1) < \log (m)$ and
$$
\frac{\log(n-1)-\log(40)-\log \log (n-1)}{\log(n)} \geq \frac{\log(496)-\log(40)-\log \log (496)}{\log(497)} =: \vartheta = 0.11\dots,
$$
from which we conclude $\log(n) \leq \vartheta^{-1} \log(m)$. We then have
$$
c_4 \log(n) \left(\sigma_m^2 + \frac{1}{m}\sum\limits_{k=m}^\infty \sigma_k^2\right) \leq \underbrace{\frac{2c_4}{\vartheta}}_{=: c_5} \cdot \frac{\log(m)}{m} \sum_{k=\lfloor m/2\rfloor}^{\infty} \sigma_k^2
$$
with $c_5 = 113.35\dots$. \\
Finally observe that we can choose $c_6=\tilde{c}_1=6568$ due to $\# J \leq \tilde{c}_1m=6568m$.\\
Now take $\tilde{m}\ge 2\tilde{c}_1$ and $m=\lfloor \tilde{m}/\tilde{c}_1 \rfloor\ge 2$. Further note that $\lfloor \tilde{m}/2\tilde{c}_1 \rfloor = \lfloor\lfloor \tilde{m}/\tilde{c}_1 \rfloor /2\rfloor$. Then
$$
g_{\tilde{m}}^2 \leq g_{\tilde{c}_1 m}^2 \leq c_5 \frac{\log(m)}{m}\sum\limits_{k=\lfloor m/2 \rfloor}^\infty \sigma_k^2 = c_5 \frac{\log(\lfloor \tilde{m}/\tilde{c}_1 \rfloor)}{\lfloor \tilde{m}/\tilde{c}_1 \rfloor}\sum\limits_{k=\lfloor \tilde{m}/(2\tilde{c}_1) \rfloor}^\infty \sigma_k^2 \,.
$$
The asserted estimates now follow due to $2\tilde{c}_1=13136$, $2 c_5 \tilde{c}_1 \leq 1.5 \cdot 10^6$, and $1/(4\tilde{c}_1) \geq 3.8 \cdot 10^{-5}$.
\end{remark}

\begin{remark}\label{rem63} The interesting question remains, whether there is a situation where the above bound on sampling numbers is sharp. Let us refer to the next subsection for a possible candidate. Clearly, there are situations where the bound in Theorem \ref{thm9} does not reflect the correct behavior of sampling numbers. This is for instance the case for the univariate Sobolev embedding $\Id:H^1([0,1]) \to L_2([0,1])$ where the sampling numbers show, at least asymptotically, the same behavior as the singular numbers.
\end{remark}

\section{An outstanding open problem}
\label{discussion}

Let us once again comment on an important open problem for the optimal sampling recovery of multivariate functions. We consider the minimal worst-case error (sampling numbers/widths) defined by
\begin{equation}\label{s_mix}
	g_n(\Id_{s,d}:H^s_{\text{mix}}(\tor^d) \to L_2(\tor^d)) := \inf\limits_{\bX = (\bx^1,...,\bx^n)}\inf\limits_{\varphi:\C^n \to L_2} \sup\limits_{\|f\|_{H^s_{\text{mix}}}\leq 1}\|f-\varphi(f(\bX))\|_{L_2(\tor^d)}\,.
\end{equation}
Let us comment on the class $H^s_{\text{mix}}(\tor^d)$. That is, we consider functions on the $d$-dimensional torus $\tor^d \simeq [0,1)^d$, where $\tor$ stands for $[0,1]$ with endpoints identified. Note, that the unit cube $[0,1]^d$ is preferred here since it has Lebesgue measure $1$ and is therefore a probability space. We could have also worked with $[0,2\pi]^d$ and the Lebesgue measure (which can be made a probability measure by a $d$-dependent rescaling).

There are many different ways to define function spaces of dominating mixed soothness, see \cite[Chapt.\ 3]{DuTeUl19}. We choose an approach which is closely related to \cite[Sect.\ 2.1]{KSU2}, see also (2.6) there. In fact, $L_2(\T^d)$-norms of mixed derivatives of the multivariate function $f$ can be written in terms of Fourier coefficients $\hat{f}_{\bk}$ of $f$. For $\alpha\in \N$ we define the space $H^{\alpha}_{\textnormal{mix}}(\tor^d)$ as the Hilbert space with the inner product
\begin{equation}\label{equ:inner_product_star}
	\langle f , g\rangle_{H^{\alpha}_{\textnormal{mix}}} := 	\sum\limits_{\bj \in \{0,\alpha\}^d}\langle D^{(\bj)}f,D^{(\bj)}g \rangle_{L_2(\tor^d)}\,.
\end{equation}
$D^{(j_1,\ldots,j_d)}=\partial_1^{j_1}\cdots \partial_d^{j_d}$ thereby denotes the weak derivative operator.
Defining the weight
\begin{equation}\label{norm2}
	w_{\alpha}(k) = (1+(2\pi |k|)^{2\alpha})^{1/2}\quad,\quad k\in \Z\,
\end{equation}
and the univariate kernel function
\begin{equation}\nonumber
	K^1_{\alpha}(x,y):=\sum\limits_{k\in \Z} \frac{\exp(2\pi\mathrm{i}k(y-x) )}{w_{\alpha}(k)^2}\quad,\quad x,y\in \tor\,,
\end{equation}
directly leads to
\begin{equation}\label{kerneld}
		K^d_{\alpha}(\bx,\by):=	K^1_{\alpha}(x_1,y_1)\cdots K^1_{\alpha}(x_d,y_d)\quad,\quad \bx,\by \in \tor^d\,,
\end{equation}
which is a reproducing kernel for $H^\alpha_{\textnormal{mix}}(\tor^d)$. In particular, for any $f\in H^\alpha_{\textnormal{mix}}(\tor^d)$ we have
$$
	f(\bx) = \langle f, K^d_{\alpha}(\bx,\cdot) \rangle_{H^{\alpha}_{\textnormal{mix}}}\,.
$$
The kernel defined in \eqref{kerneld} associated to the inner product \eqref{equ:inner_product_star} can be extended to the case of fractional smoothness $s>0$ replacing $\alpha$ by $s$ in \eqref{norm2}--\eqref{kerneld} which in turn leads to the inner product
\begin{equation*}%
 \langle f , g\rangle_{H^{s}_{\textnormal{mix}}} := \sum\limits_{\bk\in \Z^d} \hat{f}_{\bk} \, \overline{\hat{g}_{\bk}} \, \prod_{j=1}^d w_{s}(k_j)^2\,
\end{equation*}
in terms of the Fourier coefficients $\hat{f}_{\bk}$, $\hat{g}_{\bk}$ and the corresponding norm. The (ordered) sequence $(\lambda_j)_{j=1}^{\infty}$ of eigenvalues of the corresponding mapping $W_{s,d} = \Id_{s,d}^{\ast}\circ \Id_{s,d}$, where $\Id\colon \mathrm{H}\big(K^d_{s}\big) \to L_2(\tor^d)$, is the non-increasing rearrangement of the numbers
\begin{equation*}
	\Big\{\lambda_{\bk}:=\prod\limits_{j=1}^d w_{s}(k_j)^2=\prod\limits_{j=1}^d (1+(2\pi |k_j|)^{2s})^{-1}~:~\bk \in \Z^d\Big\}\,.
\end{equation*}
It has been shown by various authors, see \cite[Chapt.\ 4]{DuTeUl19} and the references therein, that we have asymptotically ($s>0$)
\begin{equation}\label{sigma_n}
	\sigma_n(\Id_{s,d}) \asymp_{s,d} n^{-s}(\log n)^{(d-1)s}\quad,\quad n\geq 2\,.
\end{equation}

The correct asymptotic behavior of \eqref{s_mix} has been addressed by several authors in the Information Based Complexity (IBC) community, see, e.g., \cite{NoWoIII} and also \cite[Outstanding Open Problem 1.4]{DuTeUl19}. It is  nowadays well-known, see e.g.\ \cite{SiUl07}, \cite{Du11}, \cite{Tr10} and \cite[Sec.~5]{DuTeUl19} for some historical remarks, that for $s>1/2$ the bound
\begin{equation}\label{widths}
	c_{s,d}n^{-s}(\log n)^{(d-1)s} \leq g_n(\Id_{s,d})\leq
	C_{s,d}n^{-s}(\log n)^{(d-1)(s+1/2)}\,
\end{equation}
holds asymptotically in $n\in \N$\,.
Note, that there is a $d$-depending gap in the logarithm between upper and lower bound.

Recently, Krieg and M. Ullrich \cite{KrUl19} improved this bound by using a probabilistic technique to show that for $s>1/2$
$$
   g_n(\Id_{s,d}) \lesssim_{s,d} n^{-s}(\log n)^{(d-1)s+s}\,.
$$
Clearly, if $s<(d-1)/2$ then the gap in \eqref{widths} is reduced to $(\log n)^s$, which is still growing in $s$. In particular, there is no improvement if $d=2$. However, this result can be considered as a major progress for the research on the complexity of this problem. They disproved Conjecture 5.6.2. in \cite{DuTeUl19} for $p=2$ and $1/2 < s <(d-1)/2$. Indeed, the celebrated sparse grid points are now beaten by random points in a certain range for $s$. This again reflects the ``power of random information'', see \cite{HiKrNoPrUl19}.

Still it is worth mentioning that the sparse grids represent the best known deterministic construction what concerns the asymptotic order. Indeed, the guarantees are deterministic and only slightly worse compared to random nodes in the asymptotic regime. However, regarding preasymptotics the random constructions provide substantial advantages. The problem is somehow related to the recent efforts in compressed sensing. There the optimal RIP matrices are given as realizations of random matrices. Known deterministic constructions are far from being optimal.

In the present paper we prove that the sparse grids are beaten for the full range of $s>1/2$ whenever $d\geq 3$. In case $d=2$ our approach and the sparse grids have the same performance. Clearly, inserting \eqref{sigma_n} into the bound in Theorem \ref{thm9} gives
\begin{equation}\label{new}
   g_n(\Id_{s,d}) \lesssim_{s,d} n^{-s}(\log n)^{(d-1)s+1/2},
\end{equation}
which shortens the gap between upper and lower bound to $\sqrt{\log n}$. The best known lower bound is the one from \eqref{sigma_n}. It is neither clear whether the bound in \eqref{new} is sharp nor if it can be improved. So this framework might serve as a candidate for Remark \ref{rem63}. Therefore, the outstanding open question remains (see, e.g., \cite[Chapt.\ 5]{DuTeUl19} and the references therein) whether there is an intrinsic additional difficulty when restricting to algorithms based on function samples rather than Fourier coefficients. From a practical point of view sampling algorithms are  highly relevant since we usually have given discrete samples of functions. The question remains: are the asymptotic characteristics $\sigma_n$ and $g_n$ of the same order or do they rather behave like $\sigma_n = o(g_n)$? This represents a  fundamental open problem in hyperbolic cross approximation, see \cite[Outstanding Open Problem 1.4]{DuTeUl19}.

\paragraph{Acknowledgment.} The authors would like to thank V.N. Temlyakov for giving a series of talks at the Chemnitz Summer School on Applied Analysis where he brought the paper \cite{NiOlUl16} to their attention. Theorem \ref{thm_weaver} is a generalization of the main result in \cite{NiOlUl16}. After this preprint was finished, V.N. Temlyakov pointed out to the authors that the proof of Lemma 2.2 in the recent paper \cite{LiTe20} also yields a version of Theorem \ref{thm_weaver} above with different constants. The authors would further like to thank Mario Ullrich for a useful comment regarding the case distinction for computing the explicit constants in Theorem 2.3. Last but not least they thank David Krieg for useful remarks on Section 4. T.U.\ would like to acknowledge support by the DFG Ul-403/2-1.

\nocite{NiOlUl16,MaSpSr15,Na20,Tem93}

\end{document}